\newcommand{\cnt}{\mathcal{Z}}
\newcommand{\grp}{\mathcal{P}}
\newcommand{\inv}{^{-1}}
\newcommand{\iso}{\cong}
\newcommand{\K}{\Bbbk}
\newcommand{\M}{\mathcal{M}}
\newcommand{\NN}{\mathbb{N}}
\newcommand{\niso}{\ncong}
\newcommand{\PP}{\mathbb{P}}
\newcommand{\sfsim}{\sim_\mathrm{sf}}
\newcommand{\stf}{\mathrm{sf}}
\DeclareMathOperator\Aut{Aut}
\DeclareMathOperator\GL{GL}
\DeclareMathOperator\gk{gk}
\DeclareMathOperator\gld{gld}
\DeclareMathOperator\gr{gr}
\DeclareMathOperator\id{id}
\DeclareMathOperator\Stab{Stab}
\DeclareMathOperator\SL{SL}
\DeclareMathOperator\Sp{Sp}
\newcommand{\xx}{R_{x^2}}
\newcommand{\yx}{R_{yx}}
\newcommand{\qp}{\mathcal{O}_q(\K^2)}
\newcommand{\qpp}{\mathcal{O}_p(\K^2)}
\newcommand{\fwa}{A_1(\K)}
\newcommand{\wa}{A_1^q(\K)}
\newcommand{\wap}{A_1^p(\K)}
\newcommand{\jp}{\mathcal{J}}
\newcommand{\sjp}{\mathcal{J}_1}
\newcommand{\os}{\mathcal{S}}
\newcommand{\env}{\mathfrak{U}}
\newcommand{\envv}{\mathfrak{V}}
\newcommand{\kx}{\K[x]}
\newcommand{\sxx}{R_{x^2-1}}
\newcommand{\hxx}{H(\xx)}
\newcommand{\hyx}{H(\yx)}
\newcommand{\hqp}{H(\qp)}
\newcommand{\hqpp}{H(\qpp)}
\newcommand{\hwa}{H(\wa)}
\newcommand{\hwap}{H(\wap)}
\newcommand{\hjp}{H(\jp)}
\newcommand{\hsjp}{H(\sjp)}
\newcommand{\hos}{H(\os)}
\newcommand{\henv}{H(\env)}
\newcommand{\henvv}{H(\envv)}
\newcommand{\hkx}{H(\kx)}
\newcommand{\hsxx}{H(\sxx)}
\newcommand{\bp}{\mathbf{p}}
\newcommand{\bq}{\mathbf{q}}
\newcommand{\qnp}{\mathcal{O}_{\bp}(\K^n)}
\newcommand{\qnq}{\mathcal{O}_{\bq}(\K^n)}
\newtheorem{thm}{Theorem}[section]
\newtheorem{cor}[thm]{Corollary}
\newtheorem{lem}[thm]{Lemma}
\newtheorem{prop}[thm]{Proposition}
\newtheorem{defn}[thm]{Definition}
\numberwithin{equation}{section}
\title{Two-generated algebras and standard-form congruence}
\author{Jason Gaddis}
\address{Wake Forest University, Department of Mathematics, P. O. Box 7388, Winston-Salem, NC 27109} 
\email{jdgaddis@gmail.com}
\thanks{}
\date{}
\subjclass[2000]{Primary 15A21, 16S37; Secondary 16S36}
\begin{document}

\begin{abstract}
Matrix congruence can be used to mimic linear maps 
between homogeneous quadratic polynomials in $n$ variables. 
We introduce a generalization, 
called standard-form congruence, 
which mimics affine maps between non-homogeneous quadratic polynomials. 
Canonical forms under standard-form congruence for three-by-three matrices are derived. 
This is then used to give a classification of algebras defined by 
two generators and one degree two relation. 
We also apply standard-form congruence to classify homogenizations of these algebras.

\textbf{Keywords:} 
Matrix congruence; Isomorphism problems; 
Two-generated algebras; Automorphism groups; 
Skew polynomial rings; Homogenization
\end{abstract}

\maketitle

\section{Introduction}

Let $\K$ be an algebraically closed field of characteristic zero. 
All algebras are $\K$-algebras and all isomorphisms are as $\K$-algebras.
We denote by $\M_n(\K)$ the ring of $n \times n$ matrices over $\K$.
We denote the center of an algebra $A$ by $\cnt(A)$.

Our interest is in algebras $A$ defined as a factor of the free algebra 
on two degree one generators by a single degree two relation, i.e., 
\begin{align}\label{form}
  A = \K\langle x,y \mid f \rangle, \deg(f)=2.
\end{align}
In case $f$ is homogeneous, the classification of such algebras is well-known 
(see, e.g., \cite{smith}). 
The polynomial $f$ can be represented by a $2 \times 2$ matrix and matrix congruence 
corresponds to linear isomorphisms between homogeneous algebras. 
Hence, canonical forms for matrices in $\M_2(\K)$ give a maximal list of algebras to consider. 
One must verify that there are no non-linear isomorphisms between the remaining algebras. 
This can be accomplished by considering ring-theoretic properties, 
resulting in four types of algebras: 
the quantum planes $\qp$, the Jordan plane $\jp$, $\yx$, and $\xx$.

We give a method for extending this idea to algebras in which $f$ is not necessarily homogeneous. 
In Section \ref{stfc}, we develop a modified version of matrix congruence called \textit{standard-form congruence}. Canonical forms in $\M_3(\K)$ under standard-form congruence are determined in Section \ref{canforms}. 
These forms are in near 1-1 correspondence with isomorphism classes of algebras of the form \eqref{form}. 
This leads to the following theorem.

\begin{thm}
\label{classification}
Suppose $A \iso \K\langle x,y \mid f \rangle$ 
where $f$ is a polynomial of degree two. 
Then $A$ is isomorphic to one of the following algebras:
\begin{align*}
  &\qp, f=xy-qyx ~ (q \in \K^\times), 
  	& ~ & \wa, f=xy-qyx-1 ~ (q \in \K^\times), \\
  &\jp,  f=yx-xy+y^2, 
  	& ~ & \sjp, f=yx-xy+y^2+1, \\
  &\env, f=yx-xy+y, 
  	& ~ & \kx, f= x^2+y, \\
  &\xx, f=x^2, 
  	& ~ & \sxx, f=x^2-1,\\
  &\yx, f=yx, 
  	& ~ & \os, f=yx-1.
\end{align*}
Furthermore, the above algebras are pairwise non-isomorphic, except \[\qp \iso \mathcal{O}_{q\inv}(\K^2) \text{ and } \wa \iso A_1^{q\inv}(\K).\]\end{thm}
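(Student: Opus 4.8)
The plan is to combine the canonical-form machinery of Section~\ref{canforms} with a battery of ring-theoretic invariants. Every degree-two polynomial $f$ in $x,y$ is encoded by a matrix in $\M_3(\K)$, and standard-form congruence realizes exactly the affine changes of generators; so by the canonical-form theorem of Section~\ref{canforms}, $f$ may be replaced by one of finitely many normal forms, each of which is the defining relation of an algebra in the list. This proves the first assertion—that $A$ is isomorphic to one of the twelve algebras—once one reads off which normal form yields which relation. The content of the theorem is the second assertion, for which two things remain: to exhibit the two stated isomorphisms, and to show that no other pair on the list is isomorphic, including by maps that are \emph{not} affine (equivalently, not seen by standard-form congruence).

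For the two stated isomorphisms I would write them down. Swapping $x\leftrightarrow y$ sends $xy-qyx$ to $yx-qxy=-q(xy-q\inv yx)$, so the assignment $x\mapsto y'$, $y\mapsto x'$ into $\mathcal{O}_{q\inv}(\K^2)$ is a well-defined isomorphism, giving $\qp\iso\mathcal{O}_{q\inv}(\K^2)$. For the quantum Weyl algebras, the map $x\mapsto y'$, $y\mapsto -q\inv x'$ into $A_1^{q\inv}(\K)=\K\langle x',y'\mid x'y'-q\inv y'x'-1\rangle$ carries $xy-qyx-1$ to $-q\inv\big(x'y'-q\inv y'x'\big)\cdot q-1$, which vanishes on the target relation, so $\wa\iso A_1^{q\inv}(\K)$. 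These identify each family's parameter set with $\{q,q\inv\}$.

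Distinctness I would establish with invariants insensitive to whether an isomorphism is affine, side-stepping the non-affine issue entirely. The primary invariant is $\gk A$: it isolates $\kx$ (the unique GK-dimension-one algebra) and the two algebras $\xx,\sxx$ of exponential growth, which I separate by noting that $x$ is a unit in $\sxx$ but is nilpotent in the connected graded algebra $\xx$. Among the rest, all of GK-dimension two, I would use (i) whether $A$ is a domain, splitting off the zero-divisor algebras $\yx$ and $\os$, and (ii) the abelianization $A^{\mathrm{ab}}$ (the maximal commutative quotient), a genuine isomorphism invariant. A short computation gives $\env^{\mathrm{ab}}\iso\K[x]$, $\jp^{\mathrm{ab}}\iso\K[x,y]/(y^2)$, $\sjp^{\mathrm{ab}}\iso\K[x]\times\K[x]$, and, for $q\neq1$, $\qp^{\mathrm{ab}}\iso\K[x,y]/(xy)$ and $\wa^{\mathrm{ab}}\iso\K[x,x\inv]$, while for $q=1$ these become $\K[x,y]$ and $0$; the two non-domains give $\yx^{\mathrm{ab}}\iso\K[x,y]/(xy)$ and $\os^{\mathrm{ab}}\iso\K[x,x\inv]$. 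Comparing these commutative rings by reducedness, connectedness, $\gk$, and unit group separates every \emph{type}; the only coincidences in abelianization, namely $\qp^{\mathrm{ab}}\iso\yx^{\mathrm{ab}}$ and $\wa^{\mathrm{ab}}\iso\os^{\mathrm{ab}}$, are resolved by whether $A$ itself is a domain.

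The main obstacle is the final step: distinguishing members \emph{within} the two one-parameter families, since the invariants above do not detect $q$. For the graded algebra $\qp$ the degree-one part is intrinsic (it is the minimal generating space), so an isomorphism $\qp\to\mathcal{O}_{q'}(\K^2)$ restricts to a linear map on $\operatorname{span}(x,y)$ and reduces to $2\times2$ matrix congruence, which by the homogeneous classification recalled in the Introduction yields $q'\in\{q,q\inv\}$; for $\wa$ I would pass to the associated graded algebra $\gr\wa\iso\qp$ together with the center, which is trivial for generic $q$ but equals $\cnt(\qp)=\K[x^n,y^n]$ when $q$ is a primitive $n$th root of unity, so that $q$ is recovered up to inversion. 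Making these reductions rigorous—verifying that an a priori non-graded or filtration-incompatible isomorphism cannot alter $q$, and handling the root-of-unity subtleties—is the technical heart of the proof; everything else is bookkeeping against the invariant table above.
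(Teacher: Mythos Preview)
Your high-level strategy matches the paper's---reduce to the canonical forms of Theorem~\ref{sfforms} and then separate the resulting algebras by ring-theoretic invariants---but there is one genuine omission and one substantive methodological difference.

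The omission: Theorem~\ref{sfforms} produces \emph{eleven} canonical forms, not ten. The extra one, with homogeneous block $M_{\jp}$ and a nonzero $(1,3)$-entry, is the defining matrix of $\envv=\K\langle x,y\mid yx-xy+y^2+x\rangle$, which does not appear in the statement of Theorem~\ref{classification}. Your sentence ``each of which is the defining relation of an algebra in the list'' is therefore false as written; you must also prove $\envv\iso\env$. The paper does this in Proposition~\ref{envv} via the non-affine map $X\mapsto -y$, $Y\mapsto x+y^2$. This is not a technicality: it is the unique instance in which two algebras of the form~\eqref{form} are isomorphic while their defining matrices are \emph{not} sf-congruent (Theorem~\ref{tgequiv}), so it is precisely the phenomenon your invariant-based strategy is designed to catch elsewhere but which here must be exhibited by hand. (Your abelianization invariant is consistent with the collapse: $\envv^{\mathrm{ab}}=\K[x,y]/(y^2+x)\iso\K[y]$ matches $\env^{\mathrm{ab}}$.)

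The methodological difference: to separate the \emph{types} of GK-two domains the paper invokes the Alev--Dumas result on differential operator rings (Proposition~\ref{dumas}), a subgroup-counting argument in $\Aut(\qp)$ and $\Aut(\wa)$, and primitivity for the $q=-1$ boundary. Your abelianization table is a cleaner and more elementary substitute, since $A^{\mathrm{ab}}$ is manifestly an isomorphism invariant and the resulting commutative rings are easily distinguished by reducedness, connectedness, and unit group; this sidesteps the automorphism-group computations entirely. For the within-family questions ($\qp\iso\qpp$ and $\wa\iso\wap$ iff $p=q^{\pm1}$) both you and the paper ultimately defer to outside work: the paper cites Theorem~\ref{qas-iso} and \cite{nspace,adcorps,suarez}, while your sketch via intrinsic gradings and associated gradeds would require comparable effort to make rigorous, exactly as you concede in your final paragraph.
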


Many of these algebras are well-known. 
The algebras $\wa$ are the \textit{quantum Weyl algebras},
$\env$ the enveloping algebra of the non-abelian two-dimensional solvable Lie algebra, 
and $\sjp$ the \textit{deformed Jordan plane}.
This list slightly contradicts that given in \cite{smith} 
since $\os$ and $\sjp$ both have Gelfand-Kirillov (GK) dimension two.
We prove this theorem in Section \ref{dist}.

We define one additional algebra,
 \[ \envv = \K\langle x,y \mid yx-xy+y^2+x\rangle.\]
This algebra is not included in Theorem \ref{classification} 
because it is isomorphic to $\env$ (Proposition \ref{envv}).

As a second application of standard-form congruence,
we consider a related class of algebras,
\begin{align}
\label{hform} 	&H = \K\langle x,y,z \mid xz-zx, yz-zy, f \rangle, \\
\notag  		&f \in \K\langle x,y,z \rangle, \;\; \deg(f)=2,\;\; f \notin \K[z], \;\; f \text{ homogeneous}.
\end{align}
Algebras of form \eqref{hform} may be 
regarded as \textit{homogenizations} 
of those of form \eqref{form}. 
In Section \ref{homog}, 
we prove (Theorem \ref{hclass}) that canonical forms under sf-congruence are in 1-1 correspondence 
with isomorphism classes of algebras of the form \eqref{hform}.

This result may be framed in terms of \textit{(Artin-Schelter) regular algebras}.
We refer the reader to \cite{NVZ} for undefined terms.
If $H$ is a global dimension three regular algebra,
then $H$ is associated to a point scheme $C \subset \PP^2$ 
and an automorphism $\sigma$ of $C$.
Suppose  $C$ contains a line fixed by $\sigma$ (Type $S_1'$ regular algebras).
By \cite{NVZ}, Proposition 1.2, 
$H$ may be \textit{twisted} so that it is isomorphic to a 
$\K$-algebra on generators $x$, $y$, $z$ with 
defining relations
	\[ xz = zx, \;\;\; yz = zy, \;\;\; h=0,\]
where $h$ is one of the following polynomials
\begin{itemize}
	\item[(I)] $yx-xy + y^2 + z(\alpha x + \beta y + \gamma z)$,
	\item[(II)] $xy-qyx+z(\alpha x + \beta y + \gamma z)$,
	$q \in \K^\times$,
\end{itemize}
for some $\alpha,\beta,\gamma \in \K$.
Hence, Theorem \ref{hclass} provides a refinement on
this classification.

\section{Congruence}
\label{cpmg}

Let $f = a x^2 + b xy + c yx + d y^2$, $a,b,c,d \in \K$. 
By a slight abuse of notation,
  \[f = 
  	\begin{pmatrix}x & y\end{pmatrix} 
  	\begin{pmatrix}a & b \\ c & d\end{pmatrix}
   	\begin{pmatrix}x \\ y\end{pmatrix}.\] 
Hence, we can represent any homogeneous quadratic polynomial by an element of $\M_2(\K)$. 
If $A$ is of the form \eqref{form}, 
then $f$ is called a \textit{defining polynomial} 
for $A$ and the matrix corresponding to $f$ is called a \textit{defining matrix} for $A$. 
The map $\phi$ given by 
$x \mapsto p_{11}x + p_{12}y$ and 
$y \mapsto p_{21}x + p_{22}y$, $p_{ij}\in \K$, 
with $p_{11}p_{22}-p_{12}p_{21} \neq 0$ corresponds to a 
linear isomorphism between the algebras with defining polynomials $f$ and $\phi(f)$. 

Similarly, $M,M' \in \M_n(\K)$ are said to be \textit{congruent} 
and we write $M \sim M'$ if there exists $P \in \GL_n(\K)$ such that $P^T M P = M'$. 
Matrix congruence is an equivalence relation on the set $\M_n(\K)$. 
A \textit{canonical form} under congruence is a distinguished representative from an equivalence class.

When two defining matrices are congruent there is an linear map between the polynomials that they determine. 
In turn, the algebras with these defining polynomials are isomorphic. 
On the other hand, if there is a linear map between two defining polynomials, 
then the corresponding algebras are isomorphic. 
However, two such algebras can still be isomorphic even if there is no linear map between the defining polynomials. 
Thus, canonical forms for congruent matrices give us a maximal list of algebras to consider and we are then left to determine whether there are any other isomorphisms.

The Horn-Sergeichuk forms depend on three block-types which we henceforth refer to as \textit{HS-blocks}\index{HS-blocks},
\begin{align*}
  J_n(\lambda) &= \begin{pmatrix}
    \lambda   & 1       &       & 0 \\
          & \lambda  & \ddots   & \\
         &       & \ddots   &  1\\
      0  &      &      &  \lambda
  \end{pmatrix}, J_1(\lambda) = \begin{pmatrix}\lambda\end{pmatrix}, \\
  \Gamma_n &= \begin{pmatrix}
  0   &     &     &     &       & (-1)^{n+1} \\
    &     &     &     &  \iddots  &  (-1)^n \\
    &    &    &  -1  &  \iddots  & \\
    &    &  1  &  1   &      & \\
    & -1   &  -1  &    &      &  \\
  1   & 1    &      &     &       & 0
  \end{pmatrix}, \Gamma_1 = \begin{pmatrix}1\end{pmatrix}, \\
  H_{2n}(\mu) &= \begin{pmatrix}0 & I_n \\ J_n(\mu) & 0\end{pmatrix}, H_{2}(\mu) = \begin{pmatrix}0 & 1 \\ \mu & 0\end{pmatrix}.
\end{align*}

\begin{thm}[Horn, Sergeichuk \cite{hornserg}]
\label{hsthm}
Each square complex matrix is congruent to a direct sum, 
uniquely determined up to permutation of summands, 
of canonical matrices of the three types 
$J_n(0)$, $\Gamma_n$, and $H_{2n}(\mu)$, $\mu \neq 0,(-1)^{n+1}$. 
Moreover, $H_{2n}(\mu)$ is determined up to replacement of $\mu$ by $\mu\inv$.
\end{thm}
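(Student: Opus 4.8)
The plan is to reduce congruence to the better-understood relation of matrix similarity by means of the \emph{cosquare}, treating the singular and nonsingular parts of the bilinear form separately. Since this is a classical result of Horn and Sergeichuk, I only sketch the architecture. First I would establish a regularizing decomposition: every $M \in \M_n(\K)$ is congruent to a direct sum $M_{\mathrm{reg}} \oplus M_{\mathrm{sing}}$, where $M_{\mathrm{reg}}$ is nonsingular and $M_{\mathrm{sing}}$ is a direct sum of nilpotent blocks $J_k(0)$. The singular summand is governed by the common structure of the left and right radicals of the form $\langle u,v\rangle = u^{T} M v$; peeling these off inductively isolates the $J_k(0)$ blocks, whose sizes and multiplicities are the Kronecker minimal indices of the pencil $sM + tM^{T}$ and hence congruence invariants (congruence of $M$ induces congruence, and so strict equivalence, of this pencil).

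For the nonsingular summand I would pass to the cosquare $C = M^{-T} M$. The key structural lemma is that over $\K$ two nonsingular matrices are congruent if and only if their cosquares are similar, so the congruence class of $M_{\mathrm{reg}}$ is recorded exactly by the Jordan canonical form of $C$. This form is constrained: from $C = M^{-T}M$ one computes $C^{-1} = M^{-1} M^{T} = M^{-1} C^{T} M$, so $C$ is similar to $C^{-1}$. Consequently the Jordan blocks of $C$ for an eigenvalue $\lambda$ must match, size for size, those for $\lambda^{-1}$. Eigenvalues $\lambda \neq \pm 1$ therefore occur in genuine pairs $\{\mu,\mu^{-1}\}$, which I would realize by the blocks $H_{2n}(\mu)$; this also explains the ambiguity $\mu \leftrightarrow \mu^{-1}$, since $H_{2n}(\mu)$ and $H_{2n}(\mu^{-1})$ yield similar cosquares. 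A direct computation confirms the realization in small cases: for instance the cosquare of $H_2(\mu)$ is $\mathrm{diag}(\mu,\mu^{-1})$.

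The delicate point, and the step I expect to be the main obstacle, is the self-paired eigenvalues $\lambda = \pm 1$. Here $\lambda = \lambda^{-1}$, so the pairing constraint is vacuous, yet a parity phenomenon intervenes: a single Jordan block of size $n$ at $\lambda = \pm 1$ arises as the cosquare of a nonsingular matrix precisely when $\lambda = (-1)^{n+1}$, realized by $\Gamma_n$ (one checks directly that the cosquare of $\Gamma_n$ is similar to $J_n((-1)^{n+1})$). At the opposite sign $\lambda = (-1)^{n}$ the blocks are forced to occur in equal-sized pairs, realized by $H_{2n}((-1)^{n})$, which is permitted because $(-1)^{n} \neq (-1)^{n+1}$; this is exactly why the parameter in $H_{2n}(\mu)$ is restricted to $\mu \neq (-1)^{n+1}$. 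Verifying this parity dichotomy requires computing the cosquares of $\Gamma_n$ and $H_{2n}(\mu)$ explicitly and ruling out a lone $J_n((-1)^{n})$ block by means of a congruence invariant, for example by comparing the form and its transpose on the relevant generalized eigenspace.

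Finally, uniqueness up to permutation of summands follows by assembling the invariants: the multiplicities of the $J_k(0)$ blocks are the Kronecker indices of $sM + tM^{T}$, while the remaining blocks are determined by the Jordan form of the cosquare of $M_{\mathrm{reg}}$, which is unique. The only residual freedom is the replacement of $\mu$ by $\mu^{-1}$ in each $H_{2n}(\mu)$, as already noted, giving precisely the stated classification.
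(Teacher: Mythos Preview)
The paper does not prove this theorem; it is quoted from Horn and Sergeichuk and used only as a black box to enumerate the four $2\times 2$ canonical forms in \eqref{22forms}. There is therefore no proof in the paper to compare your attempt against.

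For what it is worth, your sketch follows the standard cosquare approach and is broadly correct: regularize to split off the $J_k(0)$ summands, then for the nonsingular part exploit that the similarity class of $M^{-T}M$ is a complete congruence invariant over an algebraically closed field of characteristic zero. Your diagnosis of the parity phenomenon at $\lambda=\pm 1$ is accurate as well; for instance, one checks directly that no nonsingular $2\times 2$ matrix has cosquare similar to $J_2(1)$ (forcing symmetry and hence cosquare $I$), while the cosquare of $\Gamma_2$ is similar to $J_2(-1)$. The step you label the ``key structural lemma'' (congruence of nonsingular matrices $\Leftrightarrow$ similarity of cosquares) is itself substantial and is where the real work sits; in the original source this is obtained via Sergeichuk's general machinery for systems of forms and linear mappings rather than by a short direct argument, so a self-contained write-up would still owe that ingredient. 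But since the present paper merely cites the result, none of this is required here.
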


As a consequence of the previous theorem,
there are four HS-block types in dimension two: 
$\Gamma_1 \oplus J_1(0)$, $J_2(0)$, $\Gamma_2$, and $H_2(\mu)$.
We choose to use $J_2(0)^T$ in place of $J_2(0)$ and let
$q=\mu$ in $H_2(\mu)$.
These matrices are given explicitly as 
\begin{align}
\label{22forms}
	\begin{pmatrix}1 & 0 \\ 0 & 0\end{pmatrix}, 
	\begin{pmatrix}0 & 0 \\ 1 & 0\end{pmatrix}, 
	\begin{pmatrix}0 & -1 \\ 1 & 1\end{pmatrix}, 
  	\begin{pmatrix}0 & -1 \\ q & 0\end{pmatrix}, q \in \K^\times.
\end{align}
We denote these matrices by 
$M_{x^2}$, $M_{yx}$, $M_{\jp}$, and $M_q$, respectively.
It follows from Theorem \ref{hsthm}, 
or a quick computation, that $M_q \sim M_{q\inv}$.
Moreover, $M_p \sim M_q$ if and only if $p=q^{\pm 1}$ 
(see Corollary \ref{qp-cong}).

As it will be useful in the general (non-homogeneous) case
we compute the stabilizer groups for the matrices in \eqref{22forms}.
In general, these stabilizer groups correspond to some orthosymplectic group but, 
because some of the forms are degenerate, 
there are shifts in the dimension.
Any $M \in \M_n(\K)$ admits a unique decomposition $M=A+S$ where 
$A,S \in \M_n(K)$ with $S$ symmetric and $A$ antisymmetric.
Because congruence preserves symmetry (resp. antisymmetry),
then the stabilizer group of $M$ is equal to the intersection of
the stabilizer groups for $A$ and $S$.

\begin{prop}
\label{stabs}
Let $M$ be one of the matrices in \eqref{22forms}.
The group
	\[\Stab(M) = \{ P \in \GL_2(\K) \mid P^TMP=M \}\]
is described below.
\begin{align*}
\Stab\left( M_{x^2} \right) 
  	&= \left\lbrace \left. 
  	\begin{pmatrix}\pm 1 & 0 \\ r & s\end{pmatrix} 
  	\right| 	r,s \in \K^\times \right\rbrace, \\
\Stab\left( M_{yx} \right) 
	&= \left\lbrace \left.
		\begin{pmatrix}r & 0 \\ 0 & r\inv\end{pmatrix}
		\right| r \in \K^\times\right\rbrace, \\
\Stab\left( M_{\jp} \right) 
  	&= \left\lbrace \left. 
  		\pm \begin{pmatrix}1 & r \\ 0 & 1 \end{pmatrix}
  		\right| r \in \K^\times \right\rbrace, \\
\Stab\left( M_q \right) 
	&= \left\lbrace \left.
		\begin{pmatrix}r & 0 \\ 0 & r\inv\end{pmatrix}
		\right| r \in \K^\times \right\rbrace 
		~ (q \in \K^\times, q\neq \pm 1), \\
\Stab\left( M_{-1} \right) 
	&= \left\lbrace \left. 
		\begin{pmatrix}r & 0 \\ 0 & r\inv\end{pmatrix}, 
		\begin{pmatrix}0 & s \\ s\inv & 0\end{pmatrix} 
			\right| r,s \in \K^\times \right\rbrace, \\
\Stab\left( M_1 \right) &= \SL_2(\K).
\end{align*}
\end{prop}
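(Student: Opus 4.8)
The plan is to exploit the decomposition $M = A + S$ into antisymmetric and symmetric parts, together with the observation already recorded in the excerpt that $\Stab(M) = \Stab(A) \cap \Stab(S)$. In dimension two the antisymmetric factor is easy to handle: every $2 \times 2$ antisymmetric matrix is a scalar multiple $\lambda J$ of $J = \begin{pmatrix} 0 & -1 \\ 1 & 0 \end{pmatrix}$, and expanding gives $P^T J P = (\det P)\, J$ for all $P$. Hence $\Stab(\lambda J) = \SL_2(\K)$ when $\lambda \neq 0$ and $\Stab(0) = \GL_2(\K)$. Computing the antisymmetric part of each matrix in \eqref{22forms}, it is nonzero for $M_{yx}$, $M_{\jp}$, and $M_q$ with $q \neq -1$, and zero exactly for $M_{x^2}$ and $M_{-1}$. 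In particular $M_1 = J$ is purely antisymmetric, so $\Stab(M_1) = \SL_2(\K)$ immediately.

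It then remains to compute $\Stab(S)$ for the symmetric parts, which fall into three types. When $S$ is a nonzero multiple of the hyperbolic form $\begin{pmatrix} 0 & 1 \\ 1 & 0 \end{pmatrix}$ (the cases $M_{yx}$, $M_{-1}$, and $M_q$ with $q \neq \pm 1$), writing $P = \begin{pmatrix} a & b \\ c & d \end{pmatrix}$ and solving $P^T S P = S$ forces $ac = bd = 0$, which together with invertibility of $P$ leaves only the diagonal and antidiagonal matrices; the off-diagonal equation then normalizes these to $\begin{pmatrix} r & 0 \\ 0 & r\inv \end{pmatrix}$ and $\begin{pmatrix} 0 & s \\ s\inv & 0 \end{pmatrix}$ respectively, and rescaling $S$ does not affect the stabilizer. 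When $S$ has rank one (the cases $M_{x^2}$, where $S = M_{x^2}$, and $M_{\jp}$, where $S = \begin{pmatrix} 0 & 0 \\ 0 & 1 \end{pmatrix}$) a short direct calculation yields a triangular stabilizer with $\pm 1$ on the relevant diagonal entry. The remaining case $S = 0$ is $M_1$, already dispatched above.

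Finally I would intersect the two factors. For $M_{-1}$ the antisymmetric part vanishes, so $\Stab(M_{-1})$ is the full orthogonal group of the hyperbolic form, giving both the diagonal and antidiagonal families. For $M_{yx}$ and $M_q$ with $q \neq \pm 1$ the antisymmetric part is nonzero, so one intersects this orthogonal group with $\SL_2(\K)$: the diagonal matrices have determinant $1$ and survive, the antidiagonal ones have determinant $-1$ and drop out, leaving $\begin{pmatrix} r & 0 \\ 0 & r\inv \end{pmatrix}$. The forms $M_{x^2}$ and $M_{\jp}$ follow by intersecting the rank-one stabilizer with $\GL_2(\K)$ and $\SL_2(\K)$ respectively. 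I expect the only genuinely substantive points to be computing the orthogonal group of the hyperbolic plane correctly and tracking which of its two components lies in $\SL_2(\K)$; the attendant bookkeeping is the degeneration of $M_q$ at the boundary values $q = \pm 1$, where it collapses to a purely symmetric ($q = -1$) or purely antisymmetric ($q = 1$) form and the generic answer must be adjusted accordingly.
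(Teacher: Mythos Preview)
Your proposal is correct and follows essentially the same approach as the paper: both use the symmetric/antisymmetric decomposition $M = A + S$ together with $\Stab(M) = \Stab(A)\cap\Stab(S)$, reducing to the computation of the orthogonal group of the hyperbolic form and of the rank-one forms, and then intersecting with $\SL_2(\K)$ where the antisymmetric part is nonzero. The only cosmetic difference is that the paper computes $\Stab(M_{x^2})$ and $\Stab(M_{-1})$ by direct expansion first and then invokes the decomposition for $M_{\jp}$ and $M_q$, whereas you organize the argument uniformly around the decomposition from the outset.
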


\begin{proof}
Throughout, let $P \in \Stab(M)$ and write 
$P = \begin{pmatrix}a & b \\ c & d\end{pmatrix}$. 

The matrix $M_1$ corresponds to the standard basis non-degenerate dimension two alternating form. 
Thus, its stabilizer is $\Sp(2) \iso \SL_2(\K)$.

The matrix $M_{-1}$ corresponds to a non-standard basis 
non-degenerate dimension two symmetric form.
We have
\[ P^T M_{-1} P = \begin{pmatrix}2ac & ac+bd \\ ac+bd & 2bd\end{pmatrix}=\begin{pmatrix}0 & 1 \\ 1 & 0\end{pmatrix}.\] 
Hence, either $a=d=0$ or $b=c=0$ and the result follows.

In the case of $M_{x^2}$ we have
	\[ P^TM_{x^2}P = \begin{pmatrix}a^2 & ab \\ ab & b^2\end{pmatrix} = \begin{pmatrix}1 & 0 \\ 0 & 0\end{pmatrix}.\] 
Then $a=\pm 1$ and $b=0$.

We have
	\[ M_{\jp}=\begin{pmatrix}0 & 0 \\ 0 & 1\end{pmatrix} +\begin{pmatrix}0 & -1 \\ 1 & 0\end{pmatrix}.\] 
Thus, $\Stab(M_{\jp}) = \Stab(M_{x^2})^T \cap \Stab(M_1)$.

Finally, for $q \neq \pm 1$,
  \[ M_q = 
  	 \frac{q+1}{2}\begin{pmatrix}0 & -1 \\ 1 & 0\end{pmatrix}
  	+\frac{q-1}{2}\begin{pmatrix}0 & 1 \\ 1 & 0\end{pmatrix}.\]
Thus, $\Stab(M_q)=\Stab(M_1) \cap \Stab(M_{-1})$.

The case of $M_{yx}$ may be seen from the previous computation by letting $q=0$.
\end{proof}

\begin{cor}
\label{qp-cong}
Let $p,q \in \K^\times$. Then $M_p \sim M_q$ if and only if $p=q^{\pm 1}$.\end{cor}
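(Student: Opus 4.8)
The plan is to prove both implications by the direct route of solving the congruence equation $P^T M_q P = M_p$ over a general $P = \begin{pmatrix} a & b \\ c & d \end{pmatrix} \in \GL_2(\K)$. Since the matrices are only $2 \times 2$ this is entirely elementary, and it has the advantage of treating the boundary value $q=1$ (where $M_1$ is nondegenerate alternating rather than of $H_2$ type) on exactly the same footing as every other $q$, thereby sidestepping a separate bookkeeping of the exclusions $\mu \neq 0,(-1)^{n+1}$ in Theorem \ref{hsthm}. First I would record the single computation
\[
  P^T M_q P = \begin{pmatrix} (q-1)\,ac & q\,bc - ad \\ q\,ad - bc & (q-1)\,bd \end{pmatrix},
\]
so that $P^T M_q P = M_p$ is equivalent to the system $(q-1)ac = 0$, $(q-1)bd = 0$, $q\,bc - ad = -1$, $q\,ad - bc = p$, always read together with the nondegeneracy constraint $\det P = ad - bc \neq 0$.

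For the implication $p = q^{\pm 1} \Rightarrow M_p \sim M_q$ I would simply exhibit witnesses: the case $p = q$ is trivial, and for $p = q^{-1}$ the antidiagonal matrix $P = \begin{pmatrix} 0 & 1 \\ -q^{-1} & 0 \end{pmatrix}$ solves the system. This is essentially the content of the already-noted fact that $M_q \sim M_{q^{-1}}$.

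The substantive direction is $M_p \sim M_q \Rightarrow p = q^{\pm 1}$, and here the main (though still routine) point is the case analysis forced by the first two equations. If $q = 1$, those equations are vacuous and the remaining two collapse to $\det P = 1 = p$, giving $p = 1 = q$. If $q \neq 1$, then $ac = bd = 0$, and since $\det P \neq 0$ exactly one of $a,c$ vanishes; the branch $a = 0$ forces $d = 0$ and yields $p = q^{-1}$, while the branch $c = 0$ forces $b = 0$ and yields $p = q$. The only thing to watch is to invoke $\det P \neq 0$ at each step to discard the degenerate solutions in which an entire row or column vanishes, after which the conclusion $p \in \{q, q^{-1}\}$ falls out uniformly.
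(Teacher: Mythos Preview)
Your proposal is correct and follows essentially the same route as the paper: both compute $P^T M_q P$ explicitly, handle $q=1$ separately via $\det P$, and for $q\neq 1$ use $ac=bd=0$ together with $\det P\neq 0$ to force either $a=d=0$ or $b=c=0$, yielding $p=q^{\pm 1}$. The only minor difference is that for the ``if'' direction the paper simply invokes Theorem~\ref{hsthm}, whereas you write down the explicit antidiagonal witness; this is a cosmetic distinction, not a different approach.
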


\begin{proof}
Sufficiency is provided by Theorem \ref{hsthm}.
Suppose $M_p \sim M_q$ and choose $P \in \GL_2(\K)$ such that $M_p = P^TM_qP$. 
Write $P = \begin{pmatrix}a & b \\ c & d\end{pmatrix}$.
Then,
\begin{align}
\label{mqform}
	P^TM_qP = \begin{pmatrix}(q-1)ac & qbc-ad \\ qad-bc & (q-1)bd \end{pmatrix} = \begin{pmatrix}0 & -1 \\ q & 0\end{pmatrix}.
\end{align}
Observe that, if $q=1$, then $M_p=P^TM_1P=(ad-bc)M_1$,
and so $p=1$.
In general, we see by comparing $P^TM_qP$ to $M_p$ in \eqref{mqform}
that $ac=bd=0$.
Thus, either $b=c=0$ or $a=d=0$. 
In the first case, $M_p=P^TM_qP=(ad)M_q$, and so $p=q$. 
In the second case, $M_p=P^TM_qP = (-qbc)M_{q\inv}$, and so $p=q\inv$.
\end{proof}

\section{Standard Form Congruence}
\label{stfc}

In the non-homogeneous case, we write $f=ax^2+bxy+cyx+dy^2+\alpha x + \beta y + \gamma$, $a,b,c,d,\alpha,\beta,\gamma \in \K$. We can represent $f$ by a $3 \times 3$ matrix via the rule
  \[ f = \begin{pmatrix}x & y & 1\end{pmatrix}\begin{pmatrix}a & b & \alpha\\ c & d & \beta \\ 0 & 0 & \gamma\end{pmatrix}\begin{pmatrix}x \\ y \\ 1\end{pmatrix}.\]
We extend the terms \textit{defining polynomial} and \textit{defining matrix} as one would expect. However, our choice of defining matrix is not unique. One could define $f$ by
  \[ f = \begin{pmatrix}x & y & 1\end{pmatrix}\begin{pmatrix}a & b & 0\\ c & d & 0 \\ \alpha & \beta & \gamma\end{pmatrix}\begin{pmatrix}x \\ y \\ 1\end{pmatrix}.\]
Hence, it is necessary to fix a \textit{standard form} for the defining matrices of non-homogeneous polynomials. We restrict our attention to the following set,
  \[ G_3 = \left\lbrace \left. \begin{pmatrix}a_1 & a_2 & a_3 \\ b_1 & b_2 & b_3 \\ 0 & 0 & c \end{pmatrix} \right| \begin{pmatrix}a_1 & a_2 \\ b_1 & b_2\end{pmatrix} \neq 0\right\rbrace \subset \M_3(\K).\]
Every degree two polynomial has a unique corresponding matrix in $G_3$. Consider the matrix
  \[ M = \begin{pmatrix}m_{11} & m_{12} & m_{13} \\ m_{21} & m_{22} & m_{23} \\ m_{31} & m_{32} & m_{33}\end{pmatrix} \in \M_3(\K).\]
This corresponds to the polynomial
\begin{align*}
  f &= m_{11}x^2 + m_{12}xy + m_{13}x + m_{21}yx + m_{22} y^2 + m_{23} y + m_{31} x + m_{32} y + m_{33} \\
    &= m_{11}x^2 + m_{12}xy + m_{21}yx + m_{22}y^2 + (m_{13} + m_{31})x + (m_{23} + m_{32}) y + m_{33},
\end{align*}
which in turn corresponds to the matrix
  \[ \begin{pmatrix}m_{11} & m_{12} & m_{13} + m_{31} \\ m_{21} & m_{22} & m_{23} + m_{32} \\ 0 & 0 & m_{33}\end{pmatrix}.\]
Hence, we define a $\K$-linear map $\stf:\M_3(\K) \rightarrow G_3$ by
\begin{align*}
  \begin{pmatrix}m_{11} & m_{12} & m_{13} \\ m_{21} & m_{22} & m_{23} \\ m_{31} & m_{32} & m_{33}\end{pmatrix} &\mapsto
    \begin{pmatrix}m_{11} & m_{12} & m_{13} + m_{31} \\ m_{21} & m_{22} & m_{23} + m_{32} \\ 0 & 0 & m_{33}\end{pmatrix}.
\end{align*}
Let $p_{ij} \in \K$ and define a $\K$-linear map by
\begin{align}
\label{sf.lmap}
  \phi(x) = p_{11}x + p_{12}y + p_{13}, ~~\phi(y) = p_{21}x + p_{22}y + p_{23}, ~~\phi(1) = 1.
\end{align}
If $p_{11}p_{22}-p_{12}p_{21} \neq 0$, then $\phi$ defines an affine isomorphism between $\K\langle x,y \mid f \rangle$ and $\K\langle x,y \mid \phi(f) \rangle$. Thus, the matrices corresponding to affine isomorphisms of these algebras should be contained in the set
  \[ \grp_3 = \left\lbrace \begin{pmatrix}P_1 & P_2 \\ 0 & 1\end{pmatrix} \in \M_3(\K) \mid P_1 \in \GL_{2}(\K), P_2 \in \K^{2}\right\rbrace.\]

In general, we want a map that fixes the 
degree two part of a quadratic polynomial and adds the linear parts. 
We write $M \in \M_n(\K)$ in block form
\begin{align}\label{mform}
  M = \left\lbrace \begin{pmatrix}M_1 & M_2 \\ M_3^T & m\end{pmatrix} 
  \mid M_1 \in \M_{n-1}(\K), M_2,M_3 \in \K^{n-1},m \in \K \right\rbrace.
\end{align}
We call $M_1$ the \textit{homogeneous block} of $M$. 
Define the set
  \[ G_n = \left\lbrace \begin{pmatrix}M_1 & M_2 \\ 0 & m\end{pmatrix} 
  \in \M_n(\K) \mid 0 \neq M_1 \in \M_{n-1}(\K), M_2 \in \K^{n-1},m \in \K \right\rbrace.\]
Then define the map $\stf:\M_n \rightarrow G_n$ by
\begin{align}
\label{sfmap}
  \begin{pmatrix}M_1 & M_2 \\ M_3^T & m\end{pmatrix} 
  &\mapsto \begin{pmatrix}M_1 & M_2 + M_3 \\ 0 & m\end{pmatrix},
\end{align}
where the matrix is written according to \eqref{mform}. 
The matrices corresponding to affine isomorphisms of these algebras should be contained in the set
  \[ \grp_n = \left\lbrace \begin{pmatrix}P_1 & P_2 \\ 0 & 1\end{pmatrix} \in \M_n(\K) \mid P_1 \in \GL_{n-1}(\K), P_2 \in \K^{n-1}\right\rbrace.\]

\begin{prop}$\grp_n$ is a group.\end{prop}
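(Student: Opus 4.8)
The plan is to verify the group axioms directly, exploiting the block-upper-triangular structure of the elements of $\grp_n$; indeed, one recognizes $\grp_n$ as a concrete realization of the affine group $\GL_{n-1}(\K) \ltimes \K^{n-1}$, so the verification amounts to a routine block-matrix computation. Throughout I would write a typical element as $\begin{pmatrix} P_1 & P_2 \\ 0 & 1 \end{pmatrix}$ with $P_1 \in \GL_{n-1}(\K)$ and $P_2 \in \K^{n-1}$, and I would check the axioms inside the ambient group $\GL_n(\K)$.

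First I would establish closure under multiplication. Given two elements with blocks $(P_1,P_2)$ and $(Q_1,Q_2)$, block multiplication yields
\[ \begin{pmatrix} P_1 & P_2 \\ 0 & 1 \end{pmatrix}\begin{pmatrix} Q_1 & Q_2 \\ 0 & 1 \end{pmatrix} = \begin{pmatrix} P_1 Q_1 & P_1 Q_2 + P_2 \\ 0 & 1 \end{pmatrix}. \]
Since the product of two invertible matrices is invertible, $P_1 Q_1 \in \GL_{n-1}(\K)$, and plainly $P_1 Q_2 + P_2 \in \K^{n-1}$, so the product again lies in $\grp_n$. Associativity requires no separate argument, as it is inherited from matrix multiplication in $\M_n(\K)$.

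Next I would exhibit the identity and inverses. The matrix $\begin{pmatrix} I_{n-1} & 0 \\ 0 & 1 \end{pmatrix}$ belongs to $\grp_n$ and serves as the identity. For inverses I would propose the candidate $\begin{pmatrix} P_1\inv & -P_1\inv P_2 \\ 0 & 1 \end{pmatrix}$, which lies in $\grp_n$ because $P_1\inv \in \GL_{n-1}(\K)$ and $-P_1\inv P_2 \in \K^{n-1}$, and then verify via the multiplication formula above that its product with $\begin{pmatrix} P_1 & P_2 \\ 0 & 1 \end{pmatrix}$ on either side returns the identity.

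There is no genuine obstacle here. The only point meriting a moment's care is confirming that the bottom-right entry stays equal to $1$ and that the homogeneous block remains invertible under both multiplication and inversion; the explicit formulas above make each of these immediate. In effect, the content of the proposition is precisely the observation that $\grp_n$ is closed under these operations within $\GL_n(\K)$.
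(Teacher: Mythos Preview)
Your proof is correct and follows essentially the same approach as the paper: verify closure via the block-multiplication formula, note that the identity matrix lies in $\grp_n$, and exhibit the inverse as $\begin{pmatrix} P_1\inv & -P_1\inv P_2 \\ 0 & 1 \end{pmatrix}$. The paper is slightly terser (it omits the remark on associativity and the semidirect-product framing), but the substance is identical.
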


\begin{proof}That $\grp_n$ contains the identity matrix is clear. Let $P,P' \in \grp_n$. Then
  \[ PP' = \begin{pmatrix}P_1 & P_2 \\ 0 & 1\end{pmatrix}\begin{pmatrix}P_1' & P_2' \\ 0 & 1\end{pmatrix} = \begin{pmatrix}P_1P_1' & P_1P_2'+P_2 \\ 0 & 1\end{pmatrix} \in \grp_n.\]
Since $P_1 \in \GL_{n-1}(\K)$, then we can set $P_1'=P_1\inv \in \GL_{n-1}(\K)$ and $P_2'=-P_1\inv P_2$. It is now clear from the above that $P'=P\inv$.\end{proof}  

Under ordinary matrix congruence, 
two matrices which are scalar multiples of each other are always congruent. 
However, if we restrict to $\grp_n$, that is no longer the case. 
Hence, in our modified definition of congruence, 
we set scalar multiple matrices to be congruent to each other.

\begin{defn}
\label{sfdef}
We say $M,N \in \M_n(\K)$ are \textbf{standard-form congruent} 
(sf-congruent) and write $M \sfsim N$ if there exist 
$P \in \grp_n$ and $\alpha \in \K^\times$ such that 
$\stf(M) = \alpha \cdot \stf(P^T N P)$.
\end{defn}

The next proposition shows that sf-congruence is a true extension of congruence.

\begin{prop}
\label{sub}
Let $M,N \in \M_n(\K)$ with homogeneous blocks $M_1,N_1$, respectively. 
If $M \sfsim N$, then $M_1 \sim N_1$.
\end{prop}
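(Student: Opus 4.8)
The plan is to track the effect of standard-form congruence on the homogeneous block alone. The first observation is that the map $\stf$ of \eqref{sfmap} leaves the top-left $(n-1)\times(n-1)$ block of its argument untouched: it only folds $M_3$ into the last column and replaces the last row by zeros. Consequently the homogeneous block of $\stf(M)$ is exactly $M_1$, and the homogeneous block of $\stf(P^T N P)$ coincides with the homogeneous block of $P^T N P$ itself.

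Next I would compute that homogeneous block directly. Writing $P = \begin{pmatrix} P_1 & P_2 \\ 0 & 1\end{pmatrix} \in \grp_n$ and $N$ in the block form \eqref{mform}, a routine block multiplication shows
\[ P^T N P = \begin{pmatrix} P_1^T N_1 P_1 & * \\ * & * \end{pmatrix}, \]
since the column $P_2$ and the corner entry $1$ of $P$ contribute only to the last row and column of the product. Thus the homogeneous block of $\stf(P^T N P)$ is $P_1^T N_1 P_1$.

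Now I would unwind Definition \ref{sfdef}: from $M \sfsim N$ we obtain $P \in \grp_n$ and $\alpha \in \K^\times$ with $\stf(M) = \alpha\cdot\stf(P^T N P)$. Reading off homogeneous blocks on both sides yields the scalar identity $M_1 = \alpha\, P_1^T N_1 P_1$. The one remaining point — and the only place where the standing hypotheses on $\K$ enter — is to absorb the factor $\alpha$ into a genuine congruence. Since $\K$ is algebraically closed, I would choose $\lambda \in \K^\times$ with $\lambda^2 = \alpha$ and set $Q = \lambda P_1 \in \GL_{n-1}(\K)$; then $Q^T N_1 Q = \lambda^2 P_1^T N_1 P_1 = M_1$, so $M_1 \sim N_1$, as claimed. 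I do not anticipate any real obstacle: the stray scalar $\alpha$ is the only subtlety, and the existence of a square root in $\K$ dispatches it immediately.
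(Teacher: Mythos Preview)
Your argument is correct and matches the paper's proof essentially line for line: both compute the top-left block of $\stf(P^T N P)$ to obtain $M_1 = \alpha\, P_1^T N_1 P_1$ and then invoke congruence. You are simply more explicit about the final step, spelling out the square-root trick $Q = \sqrt{\alpha}\, P_1$, whereas the paper relies on its earlier remark that scalar multiples are always congruent over $\K$.
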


\begin{proof}
By hypothesis, $\stf(M) = \alpha \cdot \stf(P^T N P)$ for some $P \in \grp_n$, 
$\alpha \in \K^\times$. 
Then
\begin{align*}
    \begin{pmatrix}M_1 & M_2 \\ 0 & m\end{pmatrix} 
      &= \stf(M)
      = \alpha \cdot \stf(P^T N P) \\
      &= \alpha \cdot \stf\left(\begin{pmatrix}P_1^T & 0 \\ P_2^T & 1\end{pmatrix}
      \begin{pmatrix}N_1 & N_2 \\ 0 & m\end{pmatrix}
      \begin{pmatrix}P_1 & P_2 \\ 0 & 1\end{pmatrix}\right) \\
    &= \alpha \cdot \stf\left(\begin{pmatrix}P_1^T N_1 P_1 & ~~ & * \\ * & ~~ & *\end{pmatrix}\right)
    = \begin{pmatrix}\alpha \cdot P_1^T N_1 P_1 & ~~ & * \\ 0 & ~~ & *\end{pmatrix}.
\end{align*}  
Thus, $M_1 = \alpha \cdot P_1^T N_1 P_1$, so $M_1 \sim N_1$.
\end{proof}

The following may be regarded as a sort of converse to Proposition \ref{sub}.

\begin{cor}
\label{cor.stab}
Let $M,N \in \M_n(\K)$ with the same homogeneous block $L$.
If $P \in \grp_n$ is such that $\stf(M)=\alpha\cdot\stf(P^TNP)$ for some 
$\alpha \in \K^\times$,
then $P_1=\gamma Q_1$ for some $Q_1 \in \Stab(L)$ and $\gamma \in \K^\times$.
\end{cor}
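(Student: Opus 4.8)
The plan is to extract from the hypothesis the homogeneous-block relation already isolated in the proof of Proposition \ref{sub}, and then absorb the resulting scalar into a square root. First I would apply the block computation of that proposition verbatim: writing $P = \begin{pmatrix} P_1 & P_2 \\ 0 & 1 \end{pmatrix}$, the homogeneous block of $\stf(P^T N P)$ is $P_1^T N_1 P_1$, so comparing homogeneous blocks on the two sides of $\stf(M) = \alpha \cdot \stf(P^T N P)$ yields $M_1 = \alpha\, P_1^T N_1 P_1$. Since $M$ and $N$ share the homogeneous block $L$, this reads $L = \alpha\, P_1^T L P_1$, equivalently $P_1^T L P_1 = \alpha\inv L$.

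Next I would produce the scalar $\gamma$. Because $\K$ is algebraically closed, I may choose $\gamma \in \K^\times$ with $\gamma^2 = \alpha\inv$, and set $Q_1 = \gamma\inv P_1 \in \GL_{n-1}(\K)$. Then $Q_1^T L Q_1 = \gamma^{-2} P_1^T L P_1 = \gamma^{-2} \alpha\inv L = L$, so $Q_1 \in \Stab(L)$; and by construction $P_1 = \gamma Q_1$, which is exactly the asserted decomposition.

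I expect essentially no obstacle beyond this bookkeeping: the entire content is the single identity $P_1^T L P_1 = \alpha\inv L$ inherited from Proposition \ref{sub}, together with the existence of a square root of $\alpha\inv$, which is precisely where algebraic closure of $\K$ enters. The one point worth keeping in mind is that $L \neq 0$ (this is guaranteed since the relevant standard forms lie in $G_n$, whose homogeneous blocks are by definition nonzero), so that the scalar $\alpha$ is genuinely constrained by the relation and the rescaling by $\gamma$ is forced rather than an artifact of a degenerate case.
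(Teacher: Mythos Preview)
Your argument is correct and is essentially identical to the paper's: both extract $L=\alpha\,P_1^T L P_1$ from the homogeneous-block computation in Proposition~\ref{sub} and then rescale by a square root of $\alpha$ (the paper sets $Q_1=\sqrt{\alpha}\,P_1$, $\gamma=\sqrt{\alpha\inv}$, which is exactly your choice $\gamma^2=\alpha\inv$, $Q_1=\gamma\inv P_1$). Your additional remarks on algebraic closure and $L\neq 0$ are accurate and make explicit points the paper leaves implicit.
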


\begin{proof}
This is an immediate consequence of the computation in the previous proposition.
We have $L=\alpha \cdot P_1^T L P_1$.
Let $Q_1=\sqrt{\alpha}P_1$, 
then $Q_1 \in \Stab(M_1)$ and $\gamma=\sqrt{\alpha\inv}$.
\end{proof}

In the next section we will determine equivalence classes in $\M_3(\K)$ under
sf-congruence.
By Proposition \ref{sub} and Corollary \ref{cor.stab},
we may immediately divide the matrices into distinguished classes
depending on the homogeneous blocks.
The next proposition will allow us to show sf-congruence between
matrices with the same homogeneous block, but whose column vector $M_2$ or constant $m$
are scalar multiples.

\begin{prop}
\label{sfconst}
Let $M \in G_n$ and $\gamma \in \K^\times$.
Then
  \[ M \sfsim \begin{pmatrix}M_1 & \gamma\inv M_2 \\ 0 & \gamma^{-2}m\end{pmatrix}.\]
\end{prop}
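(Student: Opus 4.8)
The plan is to unwind Definition~\ref{sfdef} directly by exhibiting an explicit pair $(P,\alpha)$ with $P \in \grp_n$ and $\alpha \in \K^\times$ for which $\stf(M) = \alpha \cdot \stf(P^T N P)$, where $N = \begin{pmatrix}M_1 & \gamma\inv M_2 \\ 0 & \gamma^{-2}m\end{pmatrix}$ denotes the target matrix. Since $M$ and $N$ share the homogeneous block $M_1$, Proposition~\ref{sub} and Corollary~\ref{cor.stab} already dictate the shape of any valid $P$: its homogeneous block $P_1$ must be a scalar multiple of a $\Stab(M_1)$-element, and the cleanest admissible choice is a pure scalar. This suggests taking $P_1 = \gamma\inv I_{n-1}$ and $P_2 = 0$, that is, $P = \begin{pmatrix}\gamma\inv I_{n-1} & 0 \\ 0 & 1\end{pmatrix} \in \grp_n$, together with $\alpha = \gamma^2$.

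First I would verify that this $P$ genuinely lies in $\grp_n$: its homogeneous block $\gamma\inv I_{n-1}$ is invertible because $\gamma \neq 0$, and its column block is zero, so $P$ has the required form. The key structural observation is that both $P$ and $N$ are block upper triangular, so the product $P^T N P$ again has zero lower-left block; consequently $\stf$ acts as the identity on $P^T N P$, and the only remaining content of the definition is the plain matrix identity $M = \alpha \cdot P^T N P$.

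The final step is the direct block computation of $P^T N P$. Conjugating by the diagonal scalar $P$ rescales the homogeneous block as $P_1^T M_1 P_1 = \gamma^{-2} M_1$, rescales the column block (which is $\gamma\inv M_2$ in $N$) by the row factor $\gamma\inv$ to give $\gamma^{-2} M_2$, and leaves the constant as $\gamma^{-2} m$; hence $P^T N P = \gamma^{-2} M$. Scaling by $\alpha = \gamma^2$ yields $\alpha \cdot \stf(P^T N P) = \gamma^2 \cdot \gamma^{-2} M = M = \stf(M)$, as required. I expect no genuine obstacle here beyond bookkeeping: the exponents $\gamma\inv$ on $M_2$ and $\gamma^{-2}$ on $m$ in the statement are precisely those that make all three blocks rescale by the common factor $\gamma^{-2}$, which $\alpha$ then cancels.
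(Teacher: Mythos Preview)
Your proof is correct and follows essentially the same approach as the paper: both choose $P_2=0$ and $P_1$ a scalar multiple of the identity, then read off the block scaling. The only cosmetic difference is that the paper takes $P_1=\gamma I$ and conjugates $M$ to obtain $\gamma^2 N$, whereas you take $P_1=\gamma\inv I$ and conjugate $N$ to obtain $\gamma^{-2}M$; these are inverse computations and equally valid.
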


\begin{proof}
Let $I$ be the $(n-1) \times (n-1)$ identity matrix.
Let $P \in \grp_n$ with $P_1=\gamma I$ and $P_2=0$.
Then
	\[ P^TMP 
		= \begin{pmatrix}\gamma^2 M_1 & \gamma M_2 \\ 0 & m\end{pmatrix} 
		= \gamma^2 \cdot \begin{pmatrix}M_1 & \gamma\inv M_2 \\ 0 & \gamma^{-2}m\end{pmatrix}.\]
\end{proof}

Proving that standard-form congruence is an equivalence relation requires the following technical lemmas.

\begin{lem}
\label{sfofsf}
If $M \in \M_n(\K)$ and $P \in \grp_n$, 
then $\stf(P^T M P) = \stf(P^T \stf(M) P)$.
\end{lem}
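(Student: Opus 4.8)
The plan is to exploit the $\K$-linearity of $\stf$ together with the observation that $\stf$ annihilates antisymmetric matrices whose homogeneous block vanishes. Setting $N = M - \stf(M)$ and using linearity,
\[ \stf(P^T M P) - \stf(P^T \stf(M) P) = \stf\bigl(P^T(M-\stf(M))P\bigr) = \stf(P^T N P), \]
so it suffices to prove $\stf(P^T N P) = 0$. Writing $M = \begin{pmatrix} M_1 & M_2 \\ M_3^T & m\end{pmatrix}$ as in \eqref{mform}, the definition \eqref{sfmap} gives
\[ N = M - \stf(M) = \begin{pmatrix} 0 & -M_3 \\ M_3^T & 0 \end{pmatrix}, \]
which is antisymmetric and has zero homogeneous block.

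The next step uses two structural facts. First, congruence preserves antisymmetry (as noted in Section \ref{cpmg}), so $P^T N P$ is antisymmetric. Second, writing $P = \begin{pmatrix} P_1 & P_2 \\ 0 & 1\end{pmatrix}$, the homogeneous block of $P^T N P$ equals $P_1^T \cdot 0 \cdot P_1 = 0$, exactly as in the block computation of the proof of Proposition \ref{sub}. Finally, for any antisymmetric $A = \begin{pmatrix} A_1 & A_2 \\ A_3^T & a\end{pmatrix}$ one has $A_3 = -A_2$ and $a = 0$, so that $\stf(A) = \begin{pmatrix} A_1 & 0 \\ 0 & 0\end{pmatrix}$; applying this to $A = P^T N P$, whose homogeneous block is zero, yields $\stf(P^T N P) = 0$, which completes the argument.

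The computation is essentially routine; the only point requiring care is the bookkeeping confirming that $P^T N P$ retains both antisymmetry and a vanishing homogeneous block, since it is precisely this pair of properties that lets $\stf$ collapse it to zero. An alternative, more hands-on route would expand $P^T N P$ directly, read off that its last column is $-P_1^T M_3$ and its last row is $M_3^T P_1$, observe these are negative transposes of one another, and note that its bottom-right entry $M_3^T P_2 - P_2^T M_3$ vanishes as a scalar equal to its own transpose; either way the vectors added by $\stf$ cancel and the result follows.
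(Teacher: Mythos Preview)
Your proof is correct and takes a genuinely different route from the paper. The paper proves the lemma by a direct block-matrix expansion: it multiplies out $P^T M P$ in block form, applies $\stf$, simplifies the resulting column vector $P_1^T M_1 P_2 + P_1^T M_2 + (P_2^T M_1 P_1 + M_3^T P_1)^T$, and then recognizes the result as $\stf(P^T \stf(M) P)$ by reading the computation backwards. Your argument instead isolates the difference $N = M - \stf(M)$, observes it is antisymmetric with vanishing homogeneous block, and shows that congruence by any $P \in \grp_n$ preserves both properties, so that $\stf$ kills $P^T N P$. The paper's approach has the virtue of being entirely self-contained and producing the explicit formula for $\stf(P^T M P)$ along the way (which is itself useful elsewhere), while yours is shorter and explains conceptually \emph{why} the identity holds: the kernel of $\stf$ is exactly the space of antisymmetric matrices with zero homogeneous block, and this subspace is stable under $\grp_n$-congruence. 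One minor caveat: the paper formally declares the codomain of $\stf$ to be $G_n$, which excludes matrices with zero homogeneous block, so strictly speaking $\stf(N)$ and $\stf(P^T N P)$ lie outside $G_n$; but since the paper also calls $\stf$ a $\K$-linear map on all of $\M_n(\K)$, the formula \eqref{sfmap} is clearly meant to apply globally, and your use of it is legitimate.
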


\begin{proof}We have,
\begin{align*}
  \stf(P^T M P) 
    &= \stf\left(\begin{pmatrix}P_1^T & 0 \\ P_2^T & 1\end{pmatrix}\begin{pmatrix}M_1 & M_2 \\ M_3^T & m\end{pmatrix}\begin{pmatrix}P_1 & P_2 \\ 0 & 1\end{pmatrix}\right) \\
    &= \stf\left(\begin{pmatrix}P_1^T M_1 P_1 & ~~ & P_1^T M_1 P_2 + P_1^T M_2 \\ P_2^T M_1 P_1 + M_3^T P_1 & ~~ & P_2^T M_1 P_2 + P_2^T M_2 + M_3^T P_2 + m\end{pmatrix}\right) \\
    &= \begin{pmatrix}P_1^T M_1 P_1 & ~~ & P_1^T M_1 P_2 + P_1^T M_2 + (P_2^T M_1 P_1 + M_3^T P_1)^T \\ 0 & ~~ & P_2^T M_1 P_2 + P_2^T M_2 + M_3^T P_2 + m\end{pmatrix} \\
    &= \begin{pmatrix}P_1^T M_1 P_1 & ~~ & P_1^T M_1 P_2 + P_1^T M_2 + P_1^T M_1^T P_2 + P_1^T M_3 \\ 0 & ~~ & P_2^T M_1 P_2 + P_2^T M_2 + P_2^T M_3 + m\end{pmatrix} \\
    &= \stf\left(\begin{pmatrix}P_1^T & 0 \\ P_2^T & 1\end{pmatrix} \begin{pmatrix}M_1 & ~~ & M_2 + M_3 \\ 0 & ~~ & m\end{pmatrix} \begin{pmatrix}P_1 & P_2 \\ 0 & 1\end{pmatrix}\right) \\
    &= \stf(P^T \stf(M) P).
\end{align*}\end{proof}

\begin{prop}Standard-form congruence defines an equivalence relation.\end{prop}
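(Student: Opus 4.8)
The plan is to verify the three defining properties of an equivalence relation—reflexivity, symmetry, and transitivity—using the conjugation identity supplied by Lemma \ref{sfofsf}, together with the $\K$-linearity of $\stf$ and the already-established fact that $\grp_n$ is a group. Reflexivity is immediate: taking $P$ to be the identity of $\grp_n$ and $\alpha=1$ gives $\stf(M) = 1 \cdot \stf(I^T M I) = \stf(M)$, so $M \sfsim M$.

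For symmetry, suppose $M \sfsim N$, so that $\stf(M) = \alpha \cdot \stf(P^T N P)$ for some $P \in \grp_n$ and $\alpha \in \K^\times$. Since $\grp_n$ is a group, $P\inv \in \grp_n$, and I would compute $\stf((P\inv)^T M P\inv)$. By Lemma \ref{sfofsf} this equals $\stf((P\inv)^T \stf(M) P\inv)$; substituting the hypothesis and using $\K$-linearity of $\stf$ rewrites it as $\alpha \cdot \stf((P\inv)^T \stf(P^T N P) P\inv)$. A second application of Lemma \ref{sfofsf} removes the inner $\stf$, and because $(P\inv)^T P^T N P P\inv = N$, the right-hand side collapses to $\alpha \cdot \stf(N)$. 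Dividing by $\alpha$ yields $\stf(N) = \alpha\inv \cdot \stf((P\inv)^T M P\inv)$, so $N \sfsim M$ with witnesses $P\inv$ and $\alpha\inv$.

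Transitivity proceeds the same way. Given $\stf(M) = \alpha \cdot \stf(P^T N P)$ and $\stf(N) = \beta \cdot \stf(Q^T L Q)$, I would apply Lemma \ref{sfofsf} to rewrite $\stf(P^T N P)$ as $\stf(P^T \stf(N) P)$, substitute the second relation, pull out $\beta$ by linearity, and apply Lemma \ref{sfofsf} once more to obtain $\stf(M) = \alpha\beta \cdot \stf((QP)^T L (QP))$. Since $QP \in \grp_n$ and $\alpha\beta \in \K^\times$, this gives $M \sfsim L$.

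The only real subtlety is that $\stf$ is not a ring homomorphism, so one cannot naively conjugate the relation $\stf(M) = \alpha\cdot\stf(P^T N P)$ by $P\inv$ or splice two such relations together directly; the entire argument hinges on Lemma \ref{sfofsf}, which is precisely the tool that lets an outer $\stf$ absorb a conjugation applied to an already-standardized matrix. Once that identity is in hand, symmetry and transitivity reduce to routine bookkeeping, so I expect no further obstacle.
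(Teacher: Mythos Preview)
Your proposal is correct and follows essentially the same approach as the paper: reflexivity is immediate, and both symmetry and transitivity are established by repeated application of Lemma \ref{sfofsf} together with the $\K$-linearity of $\stf$ and the group structure of $\grp_n$. The only cosmetic difference is that the paper conjugates the relation $\stf(M)=\alpha\cdot\stf(P^TM'P)$ by $P\inv$ and then applies $\stf$ to both sides, whereas you compute $\stf((P\inv)^T M P\inv)$ directly; the underlying argument is the same.
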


\begin{proof}Reflexivity is obvious. Now suppose $M \sfsim M'$, so $\stf(M) = \alpha \cdot \stf(P^T M' P)$ for some $\alpha \in \K^\times$ and $P \in \grp_n$. By Lemma \ref{sfofsf},
\begin{align*}
  (P\inv)^T \stf(M) (P\inv) &=  \alpha \cdot (P\inv)^T \stf(P^T M' P) (P\inv) \\
 \stf\left((P\inv)^T \stf(M) (P\inv)\right) &= \alpha \cdot \stf\left( (P\inv)^T \stf(P^T M' P) (P\inv) \right) \\
 \alpha\inv \cdot \stf\left( (P\inv)^T M (P\inv)\right) &= \stf\left( (P\inv)^T P^T M' P (P\inv) \right) \\
 \alpha\inv \cdot \stf\left( (P\inv)^T M (P\inv)\right) &= \stf(M').
\end{align*}
Hence, $M' \sfsim M$, so symmetry holds. 

Finally, suppose $M \sfsim M'$ and $M' \sfsim M''$. Then there exists $\alpha,\beta \in \K^\times$ and $P,Q \in \grp_n$ such that \[\stf(M) = \alpha \cdot \stf(P^T M' P) \text{ and } \stf(M') = \beta \cdot \stf (Q^T M'' Q).\] By two additional applications of Lemma \ref{sfofsf},
\begin{align*}
  \stf(M) 
    &= \alpha \cdot \stf(P^T M' P) = \alpha \cdot \stf\left( P^T \stf(M') P \right) \\
    &= \alpha \cdot \stf\left( P^T \left(\beta \cdot \stf (Q^T M'' Q) \right) P \right) = (\alpha \beta) \cdot \stf( (Q P)^T M'' (QP)).
\end{align*}
Thus, $M \sfsim M''$, so transitivity holds as well.
\end{proof}

\section{Canonical Forms}
\label{canforms}

In this section, 
we determine equivalence classes for $\M_3(\K)$ under sf-congruence.
Canonical forms for these equivalence classes are presented in Theorem \ref{sfforms}.

If $M \sfsim N$, then $M_1 \sim N_1$ by Proposition \ref{sub}.
Thus, we may assume that $M \in G_3$ and $M_1$ is one of \eqref{22forms}.
By Proposition \ref{stabs}, 
it is left only to determine which pairs 
$(M_2,m)$ determine distinct forms.

\begin{cor}
\label{wamat}
Let $p,q \in \K^\times$. 
Let $W_p$ and $W_q$ be the defining matrices
for $\wap$ and $\wa$, respectively.
Then $\wap \sfsim \wa$ if and only if $p=q^{\pm 1}$.
\end{cor}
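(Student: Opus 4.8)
The plan is to reduce the claim entirely to the two-by-two result, Corollary \ref{qp-cong}, exploiting the fact that the defining matrix of a quantum Weyl algebra has no linear part. The defining polynomial of $\wa$ is $f = xy - qyx - 1$, so its defining matrix in $G_3$ is $W_q = \left(\begin{smallmatrix} 0 & 1 & 0 \\ -q & 0 & 0 \\ 0 & 0 & -1 \end{smallmatrix}\right)$, which is block-diagonal with homogeneous block $-M_q$ and constant $-1$; likewise for $W_p$. I read the assertion $\wap \sfsim \wa$ as $W_p \sfsim W_q$.

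For necessity, I would invoke Proposition \ref{sub}: if $W_p \sfsim W_q$, then their homogeneous blocks are congruent, i.e. $-M_p \sim -M_q$. Since scaling by $\sqrt{-1}\,I_2$ shows $-M \sim M$ for every $M \in \M_2(\K)$, transitivity of congruence yields $M_p \sim M_q$, and Corollary \ref{qp-cong} then forces $p = q^{\pm 1}$.

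For sufficiency, the case $p = q$ is reflexivity. For $p = q^{-1}$ I would import the explicit congruence hidden in the proof of Corollary \ref{qp-cong}: in the subcase $a = d = 0$ one takes $P_1 = \left(\begin{smallmatrix} 0 & b \\ c & 0 \end{smallmatrix}\right)$ with $bc = -q^{-1}$, which satisfies $P_1^T M_q P_1 = M_{q^{-1}}$ and hence $P_1^T(-M_q)P_1 = -M_{q^{-1}}$. Extending block-diagonally to $P = \left(\begin{smallmatrix} P_1 & 0 \\ 0 & 1 \end{smallmatrix}\right) \in \grp_3$ and computing $P^T W_q P$, the block-diagonal shape of both $W_q$ and $P$ keeps the linear column zero and the constant equal to $-1$, so that $P^T W_q P = W_{q^{-1}} = W_p$ exactly. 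Taking $\alpha = 1$ in Definition \ref{sfdef} then gives $W_p \sfsim W_q$.

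I expect no genuine obstacle here: the entire content is inherited from the two-by-two classification, and the only thing to watch is that no spurious linear or constant terms appear. This is automatic because $M_2 = 0$ for the Weyl algebra and $P$ is chosen block-diagonal, so $\stf$ acts as the identity throughout and no rescaling beyond $\alpha = 1$ is needed; the sign $-M_q$ appearing in the homogeneous block is harmless, since congruence absorbs the scalar $-1$.
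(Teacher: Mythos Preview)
Your proposal is correct and follows essentially the same approach as the paper: necessity via Proposition \ref{sub} followed by Corollary \ref{qp-cong}, and sufficiency by a direct check. The paper omits the details of the sufficiency direction entirely (``an easy check''), whereas you supply the explicit congruence for $p=q^{-1}$ and carefully track the sign $-M_q$ on the homogeneous block; this extra care is harmless and the argument goes through exactly as you describe.
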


\begin{proof}
That $W_p \sfsim W_q$ if $p=q^{\pm 1}$ is an easy check and we omit it.
The converse now follows by Corollary \ref{qp-cong} and Proposition \ref{sub}.
\end{proof}

Our last step is to determine, 
for each canonical form in $\M_2(\K)$, 
which pairs $(M_2,m)$ give sf-congruent matrices.

\begin{thm}
\label{sfforms}
Canonical forms for $\M_3(\K)$ under sf-congruence are given below:
\begin{align*}
&  \left(\begin{smallmatrix}1 & 0  & 0 \\ 0  & 0 & 0 \\ 0 & 0 & 0  \end{smallmatrix}\right)
   \left(\begin{smallmatrix}1 & 0  & 0 \\ 0  & 0 & 0 \\ 0 & 0 & -1 \end{smallmatrix}\right)    
   \left(\begin{smallmatrix}1 & 0  & 0 \\ 0  & 0 & 1 \\ 0 & 0 & 0  \end{smallmatrix}\right)  
&  & \left(\begin{smallmatrix}0 & -1 & 0 \\ 1  & 1 & 0 \\ 0 & 0 & 0  \end{smallmatrix}\right)
   \left(\begin{smallmatrix}0 & -1 & 0 \\ 1  & 1 & 0 \\ 0 & 0 & 1  \end{smallmatrix}\right)
   \left(\begin{smallmatrix}0 & -1 & 1 \\ 1  & 1 & 0 \\ 0 & 0 & 0  \end{smallmatrix}\right) \\
& \left(\begin{smallmatrix}0 & 0  & 0 \\ 1  & 0 & 0 \\ 0 & 0 & 0  \end{smallmatrix}\right)  
   \left(\begin{smallmatrix}0 & 0  & 0 \\ 1  & 0 & 0 \\ 0 & 0 & -1 \end{smallmatrix}\right) 
&  & \left(\begin{smallmatrix}0 & -1 & 0 \\ q & 0 & 0 \\ 0 & 0 & 0  \end{smallmatrix}\right)
   \left(\begin{smallmatrix}0 & -1 & 0 \\ q & 0 & 0 \\ 0 & 0 & 1 \end{smallmatrix}\right)
   \left(\begin{smallmatrix}0 & -1 & 0 \\ 1  & 0 & 1 \\ 0 & 0 & 0  \end{smallmatrix}\right).
\end{align*}
Moreover, the forms involving $q$ are determined up to replacement by $q\inv$.
\end{thm}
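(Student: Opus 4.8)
The plan is to fix a canonical homogeneous block and then classify the residual data $(M_2,m)$ under the leftover symmetry. By Proposition \ref{sub} every matrix is sf-congruent to one in $G_3$ whose homogeneous block is one of the four forms in \eqref{22forms}, so I would split the argument into four cases according to $M_1 \in \{M_{x^2}, M_{yx}, M_{\jp}, M_q\}$. For two matrices sharing a fixed canonical block $L$, Corollary \ref{cor.stab} forces the transforming $P \in \grp_3$ to satisfy $P_1 = \gamma Q_1$ with $Q_1 \in \Stab(L)$ and $\gamma \in \K^\times$, and matching the blocks via $L = \alpha P_1^T L P_1 = \alpha\gamma^2 L$ pins the scalar to $\alpha = \gamma^{-2}$. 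Feeding this into the computation of Lemma \ref{sfofsf} (with $M_3=0$) gives the explicit action on the pair $(M_2,m)$,
\[ M_2 \longmapsto \gamma\inv Q_1^T\big((L+L^T)P_2 + M_2\big), \qquad m \longmapsto \gamma^{-2}\big(P_2^T L P_2 + P_2^T M_2 + m\big), \]
so the orbit structure is governed entirely by $\Stab(L)$ (Proposition \ref{stabs}), the free translation $P_2 \in \K^2$, and the scaling $\gamma$.

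The behaviour then trichotomizes according to the rank of the symmetric part $L+L^T$ of the block. When $L+L^T$ is invertible — the cases $M_{yx}$ and $M_q$ with $q\neq 1$ — the term $(L+L^T)P_2$ sweeps out all of $\K^2$, so $P_2$ can be chosen to annihilate $M_2$ (completing the square in both variables); the remaining constant is then either zero or, using Proposition \ref{sfconst} and algebraic closedness to rescale it, normalized to a fixed nonzero value, giving exactly the two listed forms. When $L+L^T$ has rank one — the cases $M_{x^2}$ and $M_{\jp}$ — translation removes only the component of $M_2$ in the image of $L+L^T$, while the transverse component is acted on only by $Q_1^T$ and scaled by $\gamma\inv$; this yields a third orbit in which a single linear term survives, after which the constant can always be cleared, for three forms total. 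Finally, when $L+L^T = 0$ — the case $M_q$ with $q=1$, where $\Stab(L) = \SL_2(\K)$ — translation cannot move $M_2$ at all, but $P_2$ still adjusts the constant through $P_2^T M_2$; here $M_2$ is moved by the transitive $\SL_2(\K)$-action on $\K^2 \setminus \{0\}$, so the orbits are $M_2=0$ with constant $0$ or nonzero, together with a single orbit having $M_2 \neq 0$, which is the extra form $\env$. The ``$q \mapsto q\inv$'' ambiguity in the statement is precisely Corollaries \ref{qp-cong} and \ref{wamat}.

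For pairwise distinctness I would produce an sf-congruence invariant. Since $\stf$ leaves the symmetric part of a matrix unchanged, and congruence respects the symmetric/antisymmetric splitting, the symmetric part of $\stf(M)$ transforms under sf-congruence by $S \mapsto \alpha P^T S P$; hence its rank is an sf-invariant. A direct check shows that within each block this rank separates the representatives completely: it takes the values $\{1,2,3\}$ on the three forms of each of the blocks $M_{x^2}$ and $M_{\jp}$, the values $\{2,3\}$ on the two forms of $M_{yx}$ and of $M_q$ with $q\neq 1$, and the values $\{0,1,2\}$ on the three forms of $M_q$ with $q=1$. Across different blocks, separation is already provided by the congruence-up-to-scalar class of $M_1$ (Proposition \ref{sub}), refined by Corollary \ref{qp-cong} to distinguish the parameter $q$ up to inversion.

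The main obstacle is the exceptional block $M_q$ with $q=1$: there $L+L^T$ degenerates to zero, so the completion-of-the-square mechanism that governs every other case breaks down, and one must instead exploit the full $\SL_2(\K)$ stabilizer acting on the column vector. This is exactly what manufactures the additional canonical form $\env$, and it is easy to miss if $q=1$ is not separated from the generic case. The rest is bookkeeping: maintaining the constraint $\alpha = \gamma^{-2}$ while rescaling surviving entries, and verifying in the rank-one cases that the translation and the shear inside $\Stab(L)$ interact so that the constant can always be eliminated once a transverse linear term is present.
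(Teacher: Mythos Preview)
Your proposal is correct and follows the same strategy as the paper: reduce the homogeneous block to one of the forms in \eqref{22forms}, invoke Corollary \ref{cor.stab} to restrict $P_1$ to a scalar multiple of $\Stab(L)$, and then classify the residual pair $(M_2,m)$ case by case. The paper does this by writing out $\stf(P^TNP)$ explicitly for each of the four blocks (splitting off $q=1$ as its own sub-case), whereas you package the same computation into a single orbit formula and organize the four cases by $\mathrm{rank}(L+L^T)$; these are presentational differences, not mathematical ones.

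The one place you go beyond the paper is distinctness: the paper's proof only exhibits a reduction of every matrix to one of the listed forms and leaves pairwise inequivalence implicit in the case splits, while your observation that $\stf$ preserves the symmetric part --- so that the rank of $\tfrac{1}{2}(\stf(M)+\stf(M)^T)$ is an sf-invariant --- gives a clean separating invariant that the paper does not supply. That is a genuine improvement on the exposition, though not a different route.
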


\begin{proof}Suppose $M \in \M_3(\K)$.
We perform necessary congruence operations to put $M_1$ in canonical form.
Thus, $M$ is sf-congruent to a block matrix of the form
  \[ N = \begin{pmatrix}L & N_2 \\ 0 & n\end{pmatrix}\]
where $L$ is one of \eqref{22forms}, 
$N_2 = \begin{pmatrix}u & v\end{pmatrix}^T \in \K^2$, 
and $n \in \K$. 
Let $\displaystyle P=\begin{pmatrix}P_1 & P_2 \\ 0 & 1\end{pmatrix} \in \grp_3$.
By Corollary \ref{cor.stab}, we may assume $P_1 \in \Stab(L)$.
Write $P_1$ as in Proposition \ref{stabs}
and $P_2 = \begin{pmatrix}e & f\end{pmatrix}^T \in \K^2$. 

(Case 1: $L=M_{x^2}$) 
There are two cases for the stabilizer of $L$ corresponding to $\pm 1$. 
Both cases are similar and we only consider the positive case below,
\[\stf(P^TNP)=\begin{pmatrix} 1 & 0 & 2e+u+rv \\ 0 & 0 & sv \\ 0 & 0 & e^2+eu+fv+n \end{pmatrix}.\]
Because $\det(P)\neq 0$, then $s\neq 0$. 
Thus, $sv=0$ if and only if $v=0$. 
In case $v\neq 0$ we set 
$e=0$, $s=v\inv$, $r=-uv\inv$, and $f=-nv\inv$. 
This is the defining matrix of $\kx$. 
In case $v=0$, then we set $e=-\frac{1}{2}u$. 
The bottom right entry becomes $-\frac{1}{4}u^2+n$. 
Thus, if $n=\frac{1}{4}u^2$, 
then we have the defining matrix of $\xx$ and otherwise, 
by Proposition \ref{sfconst}, that of $\sxx$.

(Case 2: $L=M_{yx}$)
We have 
\[\stf(P^TNP)=\begin{pmatrix}0 & 0 & r(u+f) \\ 1 & 0 & r\inv(e+v) \\ 0 & 0 & fe+eu+fv+n\end{pmatrix}.\]
Setting $f=-u$ and $e=-v$ gives a bottom right entry of $n-uv$. 
Thus, there are two cases corresponding to $n=uv$ and $n\neq uv$. 
In the former case we arrive at the defining matrix of $\yx$ 
and in the other case, 
by Proposition \ref{sfconst}, that of $\os$.

(Case 3: $L=M_{\jp}$)
There are two cases for the stabilizer. 
We consider only the positive case, which gives,
	\[\stf(P^TNP)=\begin{pmatrix}0 & -1 & u \\ 1 & 1 & 2f+ru+v \\ 0 & 0 & f^2+eu+fv+n\end{pmatrix}.\]
Setting $f=-\frac{1}{2}(ru+v)$ allows us to make the 
$(2,3)$-entry zero. 
If $u=0$, we let $f=-\frac{1}{2}v$ and the 
$(3,3)$-entry becomes $n-\frac{1}{4}v^2$. 
Thus, in case $n=\frac{1}{4}v^2$ we have the defining matrix for $\jp$ 
and otherwise that for $\sjp$. 
If $u\neq 0$, then by Proposition \ref{sfconst} we can assume $u=1$.
Thus, $f^2+fv=\frac{1}{4}(r^2-v^2)$, and so we take
$e=\frac{1}{4}(v^2-r^2)-n$ so that the $(3,3)$-entry is zero,
giving the defining matrix for $\envv$.

(Case 4a: $L=M_1$)
Then
\[ \stf(P^TNP) = \begin{pmatrix}0 & -1 & au+cv \\ 1 & 0 & bu+dv \\ 0 & 0 & eu + fv + n \end{pmatrix}.\]
Suppose $u=v=0$. 
If $n=0$, then we have the defining matrix for $\K[x,y]$ 
and otherwise we have the matrix for $\fwa$. 
Suppose $u=0$ and $v \neq 0$. 
Setting $a=v$, $c=0$, $d=v\inv$, and $f=-nv\inv$
gives the defining matrix for $\env$.
Similarly for the case $v=0$ and $u \neq 0$. 
Finally, suppose $u,v \neq 0$. 
We choose $e,f$ such that $eu+fv=-n$. 
Because $\det(P_1)\neq 0$,
we can choose $a,b,c,d$ such that
$au+dv=0$ and $bu+dv=1$,
giving the defining matrix for $\env$.

(Case 4b: $L=M_q$, $q \neq \pm 1$)
We note that, in case $q=-1$, there are additional matrices in the stabilizer group than those considered here.
However, they are not needed in this result.
We have
\[ \stf(P^TNP) = \begin{pmatrix}0 & -1 & r( u+ (q-1)f)\\ q & 0 & r\inv (v+(q-1)e) \\ 0 & 0 & fe(q-1) + eu + fv + n \end{pmatrix}.\]
Set $f=u(1-q)\inv$ and $e=v(1-q)\inv$. 
Then the bottom right entry becomes $n-uv(q-1)\inv$. 
Thus, if $n=uv(q-1)\inv$, then this form corresponds to the 
defining matrix for $\qp$ and otherwise, 
by Proposition \ref{sfconst}, 
it corresponds to that of $\wa$.

Combining this with Corollaries \ref{qp-cong} and \ref{wamat} completes the result.
\end{proof}

\section{Classification}
\label{dist}

We wish to show that the list in Theorem \ref{classification} 
is complete with no isomorphic repetitions. 
The observant reader may have noticed a discrepancy 
in the Theorem \ref{sfforms} and Theorem \ref{classification}. 

\begin{prop}
\label{envv}
The algebras $\env$ and $\envv$ are isomorphic.
\end{prop}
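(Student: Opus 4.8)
The plan is to first explain \emph{why} no affine isomorphism can exist, which both motivates the shape of the map and shows that $\env \iso \envv$ is genuinely a non-linear phenomenon. The degree-two part of the defining polynomial of $\env$ is $yx-xy$, whose defining matrix has homogeneous block $M_1$, while that of $\envv$ is $yx-xy+y^2$, with homogeneous block $M_{\jp}$. Since $M_1 \not\sim M_{\jp}$, Proposition \ref{sub} shows $\env$ and $\envv$ are not sf-congruent; equivalently, no affine change of variables sends one defining polynomial to a scalar multiple of the other. Consequently any isomorphism must send a degree-one generator to an element of degree at least two, and the obvious discrepancy (the extra $y^2$ in $\envv$) tells me to hunt for a substitution that absorbs a $y^2$ term.

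Next I would construct the map. Writing the relation of $\envv$ as $[x,y]=xy-yx=y^2+x$, I set $w=x+y^2$ and compute $[w,y]$: because $y^2$ commutes with $y$, the cubic terms cancel and $[w,y]=[x,y]=y^2+x=w$, so $w$ and $y$ satisfy the $\env$-type relation $wy-yw=w$ inside $\envv$. This suggests the assignment $x\mapsto -y,\ y\mapsto x+y^2$ as a homomorphism $\env\to\envv$, and I would verify directly that it preserves the defining relation $yx-xy+y$ of $\env$ (a one-line bracket computation using $yx-xy=-y^2-x$ in $\envv$), so the map is well defined.

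To conclude that it is an isomorphism I would exhibit the inverse explicitly, namely $\envv\to\env$ given by $x\mapsto y-x^2,\ y\mapsto -x$, check that it respects the relation $yx-xy+y^2+x$ of $\envv$, and confirm that the two maps compose to the identity on generators in each direction. (Alternatively, since both algebras are noetherian domains of GK dimension two, surjectivity alone would force injectivity, but producing the inverse is cleaner and self-contained.) The only real obstacle is locating the correct quadratic substitution $w=x+y^2$ together with the sign flip $x\mapsto -y$; once these are in hand, every remaining step is a routine verification, and the result nicely illustrates the earlier remark that isomorphic algebras of the form \eqref{form} need not have sf-congruent defining matrices.
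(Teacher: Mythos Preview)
Your proposal is correct and follows essentially the same approach as the paper: the paper defines the very same map $\Phi:\env\to\envv$, $X\mapsto -y$, $Y\mapsto x+y^2$, verifies it respects the relation, writes down the same inverse $\Psi:\envv\to\env$, $x\mapsto Y-X^2$, $y\mapsto -X$, and checks the composition on generators. Your additional discussion of why the homogeneous blocks $M_1$ and $M_{\jp}$ are not congruent, hence why the isomorphism cannot be affine, is a welcome piece of motivation that the paper only mentions after the proof.
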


\begin{proof}
Let $X,Y$ be the generators for $\env$ and
let $x,y$ be the generators for $\envv$.
Define a map $\Phi:\env \rightarrow \envv$ by $\Phi(X)=-y$, $\Phi(Y)=x+y^2$. 
This map extends to an algebra homomorphism since
\begin{align*}
  \Phi(Y)\Phi(X)-\Phi(X)\Phi(Y)+\Phi(Y)&=(x+y^2)(-y)-(-y)(x+y^2) + (x+y^2) \\
    &= yx-xy+x+y^2.
\end{align*}
We also define $\Psi:\envv \rightarrow \env$ by $\Psi(x) = Y-X^2$, $\Psi(y) = -X$. This map also extends to an algebra homomorphism since
\begin{align*}
  \Psi(y)\Psi(x)&-\Psi(x)\Psi(y)+\Psi(x)+\Psi(y)\Psi(y) \\
    &= (-X)(Y-X^2)-(Y-X^2)(-X)+(Y-X^2)-(-X)^2 = 0.
\end{align*}
It is readily checked that $\Psi(\Phi(X))=X$ and $\Psi(\Phi(Y))=Y$ so that $\Psi=\Phi\inv$.\end{proof}

This is the one case where two algebras are isomorphic even though their defining matrices are not sf-congruent. 
This makes sense as the map $\Phi$ constructed above is not an affine isomorphism. 
The relationship between $\env$ and $\envv$ is explored further in \cite{essreg}. 
In particular, $\env$ is a \textit{Poincar\'e-Birkhoff-Witt  (PBW) deformation} of $\K[x,y]$ while 
$\envv$ is a PBW deformation of $\jp$. 
In Section \ref{homog}, we will show that the respective 
homogenizations of $\env$ and $\envv$ are not isomorphic.

One can divide the remaining algebras into two classes: the domains and non-domains. 
The non-domains can be distinguished using well-known ring-theoretic results. 
For details, we refer the interested reader to \cite{gad-thesis}.

\begin{prop}
\label{nd-niso}
The algebras $\yx$, $\xx$, $\sxx$ and $\os$ are all non-isomorphic.
\end{prop}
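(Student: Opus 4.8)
The plan is to separate these four algebras by passing to their abelianizations, which form a complete invariant in this situation. Any $\K$-algebra isomorphism carries the two-sided commutator ideal $\langle[A,A]\rangle$ onto the corresponding ideal of the target, so it descends to an isomorphism of the maximal commutative quotients $A^{\mathrm{ab}}=A/\langle[A,A]\rangle$. For an algebra $A=\K\langle x,y\mid f\rangle$ one has $A^{\mathrm{ab}}=\K[x,y]/(\bar f)$, where $\bar f$ is obtained from $f$ by imposing $xy=yx$. I would first record the four abelianizations:
\begin{align*}
\xx^{\mathrm{ab}} &= \K[x,y]/(x^2), & \yx^{\mathrm{ab}} &= \K[x,y]/(xy),\\
\sxx^{\mathrm{ab}} &= \K[x,y]/(x^2-1), & \os^{\mathrm{ab}} &= \K[x,y]/(xy-1)\iso \K[x,x^{-1}].
\end{align*}

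Next I would distinguish these four commutative rings by a cascade of three properties, each preserved under ring isomorphism. First, \emph{reducedness} isolates $\xx$: the image of $x$ is a nonzero nilpotent in $\K[x,y]/(x^2)$, whereas $(xy)$, $(x^2-1)=((x-1)(x+1))$, and $(xy-1)$ are all radical ideals, so the remaining three abelianizations are reduced. Second, among the reduced rings, being an \emph{integral domain} isolates $\os$, since $\os^{\mathrm{ab}}\iso\K[x,x^{-1}]$ is a domain while $\K[x,y]/(xy)$ and $\K[x,y]/(x^2-1)$ each contain zero divisors (note that this argument is insensitive to the fact that $\os$ itself is \emph{not} a domain — it is the abelianization that is tested).

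Finally, to separate $\yx$ from $\sxx$ I would use \emph{idempotents}. Since $\mathrm{char}\,\K\neq 2$, the ring $\sxx^{\mathrm{ab}}=\K[x,y]/(x^2-1)\iso\K[y]\times\K[y]$ carries the nontrivial idempotents $\tfrac12(1\pm x)$, whereas $\yx^{\mathrm{ab}}=\K[x,y]/(xy)$ admits only $0$ and $1$. This last claim — equivalently, that $\operatorname{Spec}\K[x,y]/(xy)$ is connected while $\operatorname{Spec}\K[x,y]/(x^2-1)$ is not — is the one point requiring genuine care. I would prove it by writing a putative idempotent as $e=c_0+\sum_{i\ge1}a_ix^i+\sum_{j\ge1}b_jy^j$, using that $\{x^i,y^j\}$ is a $\K$-basis and that $x^iy^j=0$ for $i,j\ge1$; then $e^2=e$ forces $c_0\in\{0,1\}$, and comparing coefficients in each positive degree forces every remaining $a_i,b_j$ to vanish. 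I expect this idempotent computation to be the only nontrivial step, the reductions isolating $\xx$ and $\os$ being immediate.
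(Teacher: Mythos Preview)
Your argument is correct and complete; the abelianization functor is an isomorphism invariant, your identifications of the four commutative quotients are right, and the cascade reduced/domain/connected cleanly separates them. The idempotent computation in $\K[x,y]/(xy)$ is also fine: with $p=\sum a_ix^i$ and $q=\sum b_jy^j$ one gets $p^2=(1-2c_0)p$ in $\K[x]$ and likewise for $q$, and the absence of constant terms forces $p=q=0$.

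This is, however, a genuinely different route from the paper's. The paper separates the four algebras by \emph{noncommutative} ring-theoretic invariants applied directly to the algebras themselves: $\yx$ is singled out as the unique non-prime one among the four; $\xx$ is then distinguished from $\sxx$ and $\os$ by having infinite global dimension (versus $\gld=1$ for the other two); and finally $\os$ is separated from $\sxx$ by Gelfand--Kirillov dimension ($\gk\os=2$ versus $\gk\sxx=\infty$). Your approach trades these homological and growth invariants for a single reduction to commutative algebra, which makes the proof entirely elementary and self-contained---no appeal to global or GK dimension is needed. The paper's approach, by contrast, is terser (three one-line invocations) but presupposes those invariants have been computed elsewhere, and indeed the paper defers the details to the author's thesis. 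Your method also has the pleasant feature that it works uniformly: the same functor $A\mapsto A^{\mathrm{ab}}$ handles all four cases, whereas the paper uses a different invariant at each step.
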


\begin{proof}
The algebras $\xx$, $\sxx$ and $\os$ are prime while $\yx$ is not.
We have
\[\gld\sxx = \gld\os = 1\] 
whereas $\gld\xx = \infty$. 
Finally, $\gk\os = 2$ whereas $\gk\sxx = \infty$.
\end{proof}

The domains can be further subdivided into 
differential operator rings, quantum Weyl algebras, and quantum planes.
We review each of those classes here.

\begin{prop}
\label{wa-iso}
Let $p,q \in \K^\times$.
Then $\wa \iso \wap$ if and only if $p=q^{\pm 1}$.
\end{prop}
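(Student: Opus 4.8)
The plan is to prove both directions separately, treating the sufficiency as an application of the sf-congruence machinery and the necessity as a comparison of algebra-theoretic invariants that survive arbitrary (not merely affine) isomorphisms. The sufficiency direction $p = q^{\pm 1} \Rightarrow \wa \iso \wap$ should be immediate: by Corollary \ref{wamat}, $W_p \sfsim W_q$ whenever $p = q^{\pm 1}$, and sf-congruence of defining matrices gives an affine isomorphism of the corresponding algebras by the construction in Section \ref{stfc}. So $\wa \iso \wap$ follows at once. (The isomorphism $\wa \iso A_1^{q\inv}(\K)$ is exactly the $q \mapsto q\inv$ case, matching the statement of Theorem \ref{classification}.)

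The content is in the converse. First I would dispose of degenerate values of $q$. The case $q = 1$ gives the first Weyl algebra $\fwa = A_1(\K)$, a simple Noetherian domain of GK-dimension $2$ with trivial center $\K$; I would argue separately that no $\wap$ with $p \neq 1$ is isomorphic to it, since for generic $p$ the algebra $\wap$ has a nontrivial center or fails to be simple. Assuming then $p, q \neq 1$, the main step is to extract an invariant of $\wa$ that determines $q$ up to $q \mapsto q\inv$. The natural candidate is the group-like or multiplicative structure detectable in the algebra: in $\wa$ one has the relation $xy - qyx = 1$, and localizing or passing to the associated graded algebra $\gr \wa \iso \qp$ recovers the quantum parameter $q$. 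Since $\qp \iso \mathcal{O}_{q\inv}(\K^2)$ and, by Corollary \ref{qp-cong}, $M_p \sim M_q$ iff $p = q^{\pm 1}$, the associated graded algebra already pins down $q$ up to inversion. Thus the key is to show that any algebra isomorphism $\wa \to \wap$ respects the relevant filtration (or that the parameter is recoverable in a filtration-independent way, e.g. as an eigenvalue governing the action of units on the abelianization or on a suitable space of normal elements).

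The hard part will be ruling out \emph{non-affine} isomorphisms — an arbitrary algebra isomorphism need not preserve the degree filtration, so one cannot simply invoke sf-congruence. I expect the cleanest route is to identify a genuinely isomorphism-invariant quantity. One option: examine the group of units, or the set of normal elements, and show the parameter $q$ is encoded there; another is to compute the automorphism group or the center and read off $q$. A concrete approach is to consider the commutator structure: the map $\delta = [x, -]$ and $[-, y]$ and the scalar $q$ emerging from $xy - qyx \in \K$ can be characterized representation-theoretically via the eigenvalues of the natural torus action, which any isomorphism must match. I would carry out the reduction to $\gr$ or to a localization, verify that the quantum parameter is invariant, and then invoke Corollary \ref{qp-cong} to conclude $p = q^{\pm 1}$. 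The delicate point, and the one demanding care, is justifying that the chosen invariant is preserved by \emph{all} isomorphisms rather than only by affine ones.
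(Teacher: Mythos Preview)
Your sufficiency direction is fine. For the converse, note first that the paper does \emph{not} supply a self-contained argument: its entire proof consists of citations to \cite{adcorps}, \cite{nspace}, and \cite{suarez}. So there is no internal method to compare against; the question is whether your proposal would actually work if carried out.

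It would not, at least not along the lines you sketch. You correctly flag that the associated-graded route requires the filtration to be intrinsic, and you then offer a list of alternative invariants. But each of these fails to separate parameters:
\begin{itemize}
\item The automorphism group does not help: as recorded later in the paper (proof of Theorem \ref{classification}, citing \cite{alev2}), $\Aut(\wa) \iso \K^\times$ for \emph{every} $q \neq \pm 1$, and $\Aut(A_1^{-1}(\K)) \iso \K^\times \rtimes \{\omega\}$ regardless of which primitive square root of unity one uses. So the automorphism group does not see $q$.
\item The center detects only whether $q$ is a root of unity and, if so, its order; it cannot distinguish two distinct primitive $n$th roots of unity that are not inverse to one another.
\item The group of units is $\K^\times$ in every case (these are domains admitting a degree function), so it carries no information.
\end{itemize}
Your final suggestion, extracting $q$ ``representation-theoretically via the eigenvalues of the natural torus action,'' is not yet an argument: that torus action \emph{is} the automorphism group $\K^\times$, and you have not identified an object on which it acts in a way that reveals $q$ intrinsically. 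The proofs in the cited references go through genuinely different machinery (fraction-field invariants in \cite{adcorps}, an analysis of height-one primes and normal elements in \cite{nspace}) that is not approximated by anything here. You have located the difficulty precisely but not proposed a mechanism that overcomes it.
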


\begin{proof}
When $p$ and $q$ are not roots of unity,
this was proved in \cite{adcorps}, Corollary 3.11 (c).
This result follows in full from \cite{nspace},
Propositions 6.1, 6.3, and 6.4.
See also \cite{suarez} for a more general result
that also applies here.
\end{proof}

The corresponding result for quantum planes is \cite{gad-thesis}, Corollary 4.2.12.
However, as a more general result is useful in 
Section \ref{homog},
we review it here.

We say $\bq = (q_{ij}) \in\M_n(\K^\times)$ is 
\textit{multiplicatively antisymmetric} 
if $q_{ii} = 1$ and $q_{ij}=q_{ji}\inv$ for all $i \neq j$.
Let $\mathcal{S}_n$ be the symmetric group on $n$ letters.
If $A \in \M_n(\K^\times)$ is multiplicatively antisymmetric
and $\sigma \in \mathcal{S}_n$,
then $\sigma$ acts on $A$ by $\sigma.A = [A_{\sigma(i)\sigma(j)}]$.
We say $\bp$ is a permutation of $\bq$ 
if there exists $\sigma \in \mathcal{S}_n$ such that $\bp=\sigma.\bq$.

For $\bq \in \M_n(\K^\times)$ multiplicatively antisymmetric, 
\textit{quantum affine $n$-space} $\qnq$ 
is defined as the algebra with generating basis 
$\{x_i\}$, $1 \leq i \leq n$, 
subject to the relations $x_ix_j=q_{ij}x_jx_i$ for all 
$1 \leq i,j \leq n$. 

\begin{thm}[\cite{gad-thesis}, Theorem 4.2.11]
\label{qas-iso}
$\qnp \iso \qnq$ if and only if $\bp$ is a permutation of $\bq$.
\end{thm}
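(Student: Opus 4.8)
The plan is to prove the easy direction by relabeling generators, and the hard direction by recovering from the ring structure an intrinsic invariant---the monoid of monomials together with the multiplicative commutator pairing $q_{ij}=x_ix_jx_i\inv x_j\inv$---and arguing that any isomorphism must transport one such datum onto the other by a permutation. For sufficiency, suppose $\bp=\sigma.\bq$ for some $\sigma\in\mathcal{S}_n$, so that $p_{ij}=q_{\sigma(i)\sigma(j)}$. The assignment $x_i\mapsto x_{\sigma(i)}$ carries the relation $x_ix_j=p_{ij}x_jx_i$ of $\qnp$ to $x_{\sigma(i)}x_{\sigma(j)}=q_{\sigma(i)\sigma(j)}x_{\sigma(j)}x_{\sigma(i)}$, which is a defining relation of $\qnq$; hence it extends to an algebra homomorphism, and since $\sigma$ is a bijection the map is invertible, giving $\qnp\iso\qnq$.

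For necessity I would first record the structural facts: $\qnq$ is an iterated Ore extension, hence a Noetherian domain, it is $\NN^n$-graded with every graded component one-dimensional (spanned by a single monomial $x^a$), and its units are exactly $\K^\times$. Since normality is intrinsic, any isomorphism $\phi\colon\qnp\to\qnq$ sends normal elements to normal elements. The key lemma is a description of the normal elements of $\qnq$: using the multigrading, a nonzero element is normal precisely when its monomial support lies in a single class of the commutator character $a\mapsto(x^ax_ix^{-a}x_i\inv)_i$, and in the generic situation this forces normal elements to be scalar multiples of monomials. Granting this, localizing at the normal monomials turns $\qnq$ into its quantum torus, whose units modulo $\K^\times$ form the lattice $\mathbb{Z}^n$ of Laurent monomials carrying the skew bicharacter $\beta_{\bq}(e_i,e_j)=q_{ij}$. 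Then $\phi$ extends to an isomorphism of tori inducing a $\mathbb{Z}$-linear isomorphism of these lattices that preserves the bicharacter.

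To force a permutation I would exploit the positive cone: the subalgebra $\qnq$ consists of the monomials with nonnegative exponents, and since $\phi$ and $\phi\inv$ both send monomials into the respective subalgebras, the induced lattice isomorphism restricts to a monoid automorphism of $\NN^n$. Every automorphism of the free commutative monoid on $n$ generators is a permutation of those generators, so the lattice map is a permutation matrix $\sigma$; combined with preservation of the bicharacter this shows that $\beta_{\bp}$ and $\beta_{\bq}$ agree after relabelling by $\sigma$, i.e.\ $\bp=\sigma.\bq$.

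The main obstacle is exactly the non-generic directions, where the key lemma fails: if some $q_{ij}=1$ the corresponding variables commute, and in the extreme case $\bq\equiv 1$ one has $\qnq\iso\K[x_1,\dots,x_n]$, in which every element is normal and central, so localizing at normal elements does not produce the torus and monomial rigidity is lost. I would handle this by isolating the radical of the bicharacter: the central monomials span an intrinsic central subalgebra, the commuting directions are matched against one another by a polynomial-ring recognition (e.g.\ via $\gk$ and the structure of the center) together with an induction on $n$, while the rigidity argument above pins down the genuinely noncommuting directions. Reconciling the permutation produced on the noncommutative part with the one needed on the commutative part, so that a single $\sigma\in\mathcal{S}_n$ works globally, is the delicate point.
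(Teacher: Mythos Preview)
Your sufficiency direction is exactly the paper's: the map $x_i\mapsto y_{\sigma(i)}$ carries relations to relations and is invertible.

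For necessity the paper takes a much shorter route than you do. Its sketch asserts directly that \emph{any} isomorphism $\qnp\to\qnq$ determines a permutation of the degree one generators, and then checks that such a permutation forces the parameter matrices to agree up to that permutation. In other words, the paper works with the graded structure: an isomorphism induces a linear map on the span of the generators (e.g.\ via the associated graded, or by looking at $\mathfrak m/\mathfrak m^2$ for the augmentation ideal), and then one argues that the defining relations $x_ix_j=p_{ij}x_jx_i$ force this linear map to be a scaled permutation matrix. This approach is uniform in $\bq$; the degenerate directions where some $q_{ij}=1$ simply mean the permutation on those indices is underdetermined, but any choice works since the parameters there are all $1$.

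Your route through normal elements, localization to the quantum torus, and the positive cone of $\NN^n$ is more structural and buys you a clean invariant---the bicharacter on the monomial lattice---but at the cost of the very degeneracy you flag. When some $q_{ij}=1$, the description ``normal $=$ scalar times monomial'' breaks down, so you cannot conclude that $\phi(x_i)$ is a monomial, and the passage to a monoid automorphism of $\NN^n$ is no longer justified. Your proposed fix (split off the radical of the bicharacter, induct, then reconcile the two permutations) is plausible but not yet a proof: the reconciliation step is exactly where the argument currently stops. The paper's graded-linear approach sidesteps this entirely by never leaving the degree one piece, so you might find it cleaner to adopt that viewpoint rather than repair the torus argument case by case.
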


\begin{proof}
(Sketch)
Let $\{x_i\}$ be a generating basis for $\qnp$ and
$\{y_i\}$ that for $\qnq$.
If $\bp=\sigma.\bq$, then one easily constructs an isomorphism
$\Phi:\qnp \rightarrow \qnq$ given by $\Phi(x_i)=y_{\sigma(i)}$.

Conversely, any isomorphism determines a permutation of the 
degree one elements.
One then checks that such a permutation must also permute the
parameters accordingly.
\end{proof}

We now review isomorphisms between the rings $\env$, $\jp$, and $\sjp$.

Let $S$ be a ring. 
Given $\sigma \in \Aut(S)$, 
a $\K$-linear map $\delta:S \rightarrow S$ is said to be a 
\textit{$\sigma$-derivation} if it satisfies the twisted Leibniz rule, 
$\delta(ab)=\sigma(a)\delta(b)+\delta(a)b$ 
for all $a,b \in S$. 
The \textit{skew polynomial ring}
$R=S[x;\sigma,\delta]$ 
is the overring of $S$ with commutation given by 
$x a = \sigma(a)x + \delta(a)$ for all $a \in S$. 
If $\delta=0$, then we write $R=S[x;\sigma]$. 
If $\sigma=\id_S$, then we write $R=S[x;\delta]$ 
and $R$ is said to be a \textit{differential operator ring}.
The algebras $\env$, $\jp$, and $\sjp$ all have this form with 
$\delta(y)=y,y^2$, and $y^2+1$, respectively. 

\begin{prop}[Alev, Dumas, \cite{dumas}, Proposition 3.6]
\label{dumas}
Let $\K[y][x;\delta]$ and $\K[Y][X;d]$ be differential operator rings 
with $\delta(y)=f \in \K[y]$ and $d(Y)=g \in \K[Y]$.
Then $\K[y][x;\delta] \iso \K[Y][X;d]$ if and only if there exists 
$\lambda, \alpha \in \K^\times$ and $\beta \in \K$ such that 
$f(y) = \lambda g(\alpha y + \beta)$.
\end{prop}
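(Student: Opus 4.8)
The statement reduces to a classification of the derivation \(\delta=f(y)\,\partial_y\) of \(\K[y]\) up to the affine group and scaling. Writing \(R=\K[y][x;\delta]\) and \(R'=\K[Y][X;d]\), the defining relations are \([x,y]=f(y)\) and \([X,Y]=g(Y)\), and the condition \(f(y)=\lambda g(\alpha y+\beta)\) says exactly that transporting \(d\) along the affine automorphism \(y\mapsto\alpha y+\beta\) and rescaling recovers \(\delta\). My plan is to treat the two implications separately, with sufficiency an explicit construction and necessity the real content.

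For sufficiency, assume \(f(y)=\lambda g(\alpha y+\beta)\). I would define \(\Phi\colon R\to R'\) on generators by \(\Phi(y)=\alpha\inv(Y-\beta)\) and \(\Phi(x)=\alpha\lambda X\), and check that \(\Phi\) respects the defining relation:
\[ \Phi(x)\Phi(y)-\Phi(y)\Phi(x)=\alpha\lambda\cdot\alpha\inv(XY-YX)=\lambda g(Y)=f(\alpha\inv(Y-\beta))=\Phi(f(y)). \]
Hence \(\Phi\) extends to an algebra homomorphism, and since it acts on the generating pair by an invertible affine substitution (\(\alpha,\lambda\in\K^\times\)), it is bijective; thus \(\Phi\) is an isomorphism.

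For necessity, suppose \(\Psi\colon R\to R'\) is an isomorphism. The difficulty is that \(\Psi\) need not send \(y\) to an affine function of \(Y\) nor \(x\) to a scalar multiple of \(X\); the subalgebra \(\K[y]\) and the generator \(x\) must first be recovered intrinsically. I would filter \(R\) by degree in \(x\) (with \(\deg y=0\)); since \([x,y]=f(y)\) lowers \(x\)-degree, the associated graded \(\gr R\) is the commutative polynomial ring \(\K[y,x]\) carrying the induced Poisson bracket \(\{x,y\}=f(y)\), and likewise for \(R'\). The plan is then to show that \(\Psi\), after post-composition by a suitable automorphism, preserves these filtrations and induces a graded Poisson isomorphism \(\K[y,x]\to\K[Y,X]\); classifying those introduces an affine change in the \(y\)-variable together with a scaling in the \(x\)-variable, from which \(f(y)=\lambda g(\alpha y+\beta)\) is read off by comparing the symbols of the two brackets.

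The main obstacle is precisely this normalization: the \(x\)-degree filtration depends on the chosen generators, so I must argue that \(\K[y]\) is determined up to affine automorphism independently of the presentation. I would do this by characterizing \(\K[y]\) intrinsically — for instance as a maximal commutative subalgebra distinguished by being stable under the inner derivation \([x,\cdot]\), using the asymmetry that \([x,\K[y]]\subseteq\K[y]\) while \(\K[x]\) is not correspondingly \([y,\cdot]\)-stable — and by first establishing that \(\deg f=\deg g\) is an isomorphism invariant. Granting this rigidity, comparison of leading symbols pins down \(\alpha,\beta,\lambda\) and completes the argument; I expect the intrinsic recovery of \(\K[y]\) to be the crux of the whole proof.
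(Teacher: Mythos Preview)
The paper does not supply a proof here; the proposition is quoted from Alev--Dumas and used as a black box in the corollary that follows, so there is no in-paper argument to compare against.

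Your sufficiency direction is correct and complete. For necessity you have correctly located the crux but not resolved it, and the characterization you propose is circular: describing $\K[y]$ as ``stable under $[x,\cdot]$'' presupposes the generator $x$, which belongs to the very presentation you are trying to recover, and ``maximal commutative'' alone does not single it out either, since $\K[x]$ is its own centralizer and hence also maximal commutative. For the same reason, asserting that $\Psi$ can be corrected to respect the $x$-degree filtration is not a genuine reduction, because that filtration is itself presentation-dependent. What closes the gap is a truly intrinsic invariant. One that works: when $\deg f\ge 1$, the subalgebra $\K[y]$ coincides with the set of $a\in R$ for which $\mathrm{ad}(a)$ is locally nilpotent on $R$ (elements of $\K[y]$ strictly lower $x$-degree under $\mathrm{ad}$, whereas for $a$ of positive $x$-degree the iterates $\mathrm{ad}(a)^k(y)$ never vanish); the remaining case $\deg f=0$ is the Weyl algebra, singled out as the unique simple member of the family, and there the conclusion is immediate since $g$ must also be a nonzero constant. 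Once $\Psi(\K[y])=\K[Y]$ is established, the rest of your outline does go through cleanly: the restriction $\Psi|_{\K[y]}$ is an affine substitution, surjectivity forces $\Psi(x)=cX+p_0(Y)$ with $c\in\K^\times$, and comparing $[\Psi(x),\Psi(y)]=\Psi(f(y))$ reads off $f(y)=\lambda g(\alpha y+\beta)$.
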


\begin{cor}
\label{diff-iso}
The algebras $\fwa$, $\env$, $\jp$ and $\sjp$ are all non-isomorphic.
\end{cor}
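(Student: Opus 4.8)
The plan is to realize all four algebras as differential operator rings over $\K[y]$ and then apply the Alev--Dumas criterion (Proposition \ref{dumas}). The rings $\env$, $\jp$, and $\sjp$ are recorded above as $\K[y][x;\delta]$ with $\delta(y)=y$, $y^2$, and $y^2+1$ respectively, and the Weyl algebra $\fwa$ is likewise of this form with $\delta(y)=1$. Thus the four algebras correspond to the polynomials $f=1$, $y$, $y^2$, and $y^2+1$.

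First I would extract a crude numerical invariant. By Proposition \ref{dumas}, an isomorphism $\K[y][x;\delta]\iso\K[Y][X;d]$ forces $f(y)=\lambda\,g(\alpha y+\beta)$ for some $\lambda,\alpha\in\K^\times$ and $\beta\in\K$. Since $\alpha\neq 0$, the affine substitution $y\mapsto \alpha y+\beta$ preserves degree, and multiplying by $\lambda\in\K^\times$ does not alter it either; hence $\deg f=\deg g$. Therefore $\deg f$ is an isomorphism invariant, and this alone separates $\fwa$ ($\deg=0$), $\env$ ($\deg=1$), and the pair $\jp,\sjp$ ($\deg=2$).

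The only remaining task---and the one genuine obstacle, since the degree invariant is now useless---is to distinguish $\jp$ from $\sjp$. Here I would apply Proposition \ref{dumas} head-on: an isomorphism $\jp\iso\sjp$ would supply $\lambda,\alpha\in\K^\times$ and $\beta\in\K$ with $y^2=\lambda\bigl((\alpha y+\beta)^2+1\bigr)$. Matching coefficients, the $y^2$ term gives $\lambda\alpha^2=1$, the $y$ term gives $2\lambda\alpha\beta=0$ and hence $\beta=0$, and then the constant term forces $\lambda(\beta^2+1)=\lambda=0$, contradicting $\lambda\in\K^\times$. Thus $\jp\niso\sjp$, which completes the proof.
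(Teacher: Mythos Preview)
Your proof is correct and follows essentially the same route as the paper: both reduce the $\jp$ versus $\sjp$ comparison to the Alev--Dumas criterion and derive a contradiction by matching coefficients, and both distinguish $\env$ from $\jp,\sjp$ by the degree of $\delta(y)$. The only minor difference is that the paper disposes of $\fwa$ separately by invoking its simplicity, whereas you fold it into the uniform degree-invariant argument; your version is slightly more systematic but not materially different.
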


\begin{proof}
The algebra $\fwa$ is simple and therefore distinct.
The algebra $\env$ is not isomorphic to $\jp$ and $\sjp$ since $\deg(xy-yx)=1$. 
If $\sjp \iso \jp$ is an isomorphism, 
then by Proposition \ref{dumas} there exists 
$\alpha,\beta \in \K$ and $\lambda\in \K^\times$ such that 
\[ (y^2+1)=\lambda(\alpha y + \beta)^2 
= \lambda(\alpha^2 y^2 + 2\alpha\beta y + \beta^2).\]
Comparing coefficients of $y$ we get that $\alpha=0$ or $\beta=0$,
a contradiction.
\end{proof}
 
Combining with results obtained previously, we can now complete the proof of our main theorem.

\vspace{1em}

\noindent\textit{Proof of Theorem \ref{classification}.} Let $A$ and $A'$ be of the form (\ref{form}) with defining matrices $M,M' \in \M_3(\K)$, respectively. If $M \sfsim M'$, then $A \iso A'$. By Theorem \ref{sfforms} and Proposition \ref{envv}, we need only show that there are no additional isomorphisms between the algebras in the present theorem.

The non-domains $\yx$, $\xx$, $\sxx$ and $\os$ are all non-isomorphic by Proposition \ref{nd-niso}. The algebra with defining polynomial $x^2-y$ is isomorphic to $\kx$ via the map $x \mapsto x$ and $y \mapsto x^2$. 
This algebra, along with 
$\mathcal{O}_1(\K^2) \iso \K[x,y]$ are the only commutative rings considered, and are distinct from one another.
As noted above, $\fwa$ is simple and therefore distinct from the other domains considered.

The domains can be divided, as above,
into one of three classes:
quantum planes, quantum Weyl algebras, and differential operator rings.
By Corollary \ref{diff-iso},
Proposition \ref{wa-iso},
and Theorem \ref{qas-iso},
the algebras belonging to each class are distinct from one another
with the exceptions $\qp \iso \mathcal{O}_{q\inv}(\K^2)$
and $\wa \iso A_1^{q\inv}(\K)$.
Proving that an algebra belongs to exactly 
one of these classes requires a study of their automorphism groups. 

If $q \neq \pm 1$, then 
$\Aut(\wa) \iso (\K^\times)$ and 
$\Aut(\qp) \iso (\K^\times)^2$. 
On the other hand, if $q=-1$, 
then $\Aut(\wa) \iso \K^\times \rtimes \{\omega\}$ and 
$\Aut(\qp) \iso (\K^\times)^2 \rtimes \{\omega\}$ where 
$\omega$ is the involution switching the generators $x$ and $y$ 
(see \cite{alev2} and \cite{alev1}). 
By counting subgroups of order four, 
it follows that $\wap \niso \qp$ for all $p,q \in \K^\times$. 
In particular, $\K^\times$ has one subgroup of order four and 
$(\K^\times)^2$ has four. 
On the other hand, in $\K^\times \rtimes \{\omega\}$ there are 
two subgroups of order four and in 
$(\K^\times)^2 \rtimes \{\omega\}$ there are eight.

As a consequence of Proposition \ref{dumas},
the automorphism groups of $\env$, $\jp$, or $\sjp$
are non-abelian semidirect products of a subgroup of $\K^\times$ by $\K[y]$
(see also \cite{dumas}, Proposition 3.6).
Thus, these rings are distinct from $\qp$ and $\wa$ except
perhaps in the case that $q=-1$.
However, in this case, $x^2$ is central and
$\qp$ and $\wa$ are not primitive by \cite{kirkkuz}, Proposition 3.2,
whereas differential operator rings over $\K[y]$ are always primitive (see, e.g., \cite{goodwa}).
\qed

Our results can be summed up succinctly in the following theorem. 

\begin{thm}
\label{tgequiv}
Let $A$ and $A'$ be of the form \eqref{form} 
with defining matrices $M,M' \in \M_3(\K)$, respectively. 
If $M \sfsim M'$, then $A \iso A'$. 
Conversely, if $A \iso A'$, 
then $M \sfsim M'$ unless $A\iso\env$ and $A'\iso\envv$ (or vice-versa).
\end{thm}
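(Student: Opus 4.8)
The plan is to prove the two directions of Theorem \ref{tgequiv} separately, drawing almost entirely on results already established in the excerpt. The forward direction is essentially free: if $M \sfsim M'$, then their homogeneous blocks satisfy $M_1 \sim M_1'$ by Proposition \ref{sub}, and more importantly the whole sf-congruence is realized by some $P \in \grp_3$ and scalar $\alpha$. The map $\phi$ associated to $P$ as in \eqref{sf.lmap} is an affine isomorphism (since $P_1 \in \GL_2(\K)$), and rescaling the defining polynomial by $\alpha \in \K^\times$ does not change the algebra it defines. Hence $A \iso A'$. I would state this cleanly and cite Proposition \ref{sub} together with the definition of $\grp_3$.

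For the converse, the strategy is to reduce to the canonical forms of Theorem \ref{sfforms}. First I would invoke Theorem \ref{sfforms} to replace $M$ and $M'$ by their sf-canonical representatives; since sf-congruence is an equivalence relation, it suffices to show that if $A \iso A'$ and both are in canonical form, then either $M \sfsim M'$ (i.e.\ $M = M'$ among the canonical list, up to the $q \mapsto q\inv$ ambiguity) or we are in the exceptional $\env$/$\envv$ situation. The key observation is that the canonical forms of Theorem \ref{sfforms} are in bijection with the twelve algebras of Theorem \ref{classification} (this is exactly how Theorem \ref{sfforms} was used to prove Theorem \ref{classification}), with $\envv$ being the sole extra algebra whose matrix appears in the list but which coincides as an algebra with $\env$ via the non-affine map of Proposition \ref{envv}.

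The heart of the argument is therefore just a bookkeeping translation of the proof of Theorem \ref{classification}. In that proof it was shown that the twelve algebras are pairwise non-isomorphic except for the parameter identifications $\qp \iso \mathcal{O}_{q\inv}(\K^2)$ and $\wa \iso A_1^{q\inv}(\K)$, and except for $\env \iso \envv$. The two parameter identifications are precisely the ``determined up to replacement by $q\inv$'' clause of Theorem \ref{sfforms}, so they do \emph{not} produce counterexamples to the converse: there $M \sfsim M'$ genuinely holds by Corollaries \ref{qp-cong} and \ref{wamat}. The only genuine failure of $A \iso A' \Rightarrow M \sfsim M'$ is the pair $\env$, $\envv$, whose defining matrices $\left(\begin{smallmatrix}0 & -1 & 0 \\ 1 & 0 & 1 \\ 0 & 0 & 0\end{smallmatrix}\right)$ and $\left(\begin{smallmatrix}0 & -1 & 1 \\ 1 & 1 & 0 \\ 0 & 0 & 0\end{smallmatrix}\right)$ have non-congruent homogeneous blocks $M_{yx}$ versus $M_{\jp}$ (these are distinct entries of \eqref{22forms}, hence not congruent by Theorem \ref{hsthm}), so $M \not\sfsim M'$ by Proposition \ref{sub}, yet $A \iso A'$ by Proposition \ref{envv}.

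I expect the main obstacle to be purely expository rather than mathematical: making the correspondence between canonical forms and algebras fully explicit and confirming that every non-isomorphism recorded in the proof of Theorem \ref{classification} rules out an sf-congruence \emph{via distinct homogeneous blocks or distinct $(M_2,m)$ data}, so that no ``accidental'' algebra isomorphism hides a failure of sf-congruence beyond the single $\env$/$\envv$ exception. Concretely, I would verify that whenever two algebras share a homogeneous block (e.g.\ the pairs distinguished only by the constant term, such as $\xx$ vs.\ $\sxx$ or $\yx$ vs.\ $\os$), the non-isomorphism results of Proposition \ref{nd-niso} and Corollary \ref{diff-iso} already show they are non-isomorphic \emph{as algebras}, so no new exception arises. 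The one place requiring care is confirming that the $\env$/$\envv$ coincidence is the \emph{unique} instance where an algebra isomorphism crosses between distinct homogeneous-block classes; this follows because any such crossing isomorphism is necessarily non-affine, and the classification of Theorem \ref{classification} already enumerates all algebra isomorphisms, leaving $\env \iso \envv$ as the only one not induced by an element of $\grp_3$.
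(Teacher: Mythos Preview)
Your approach is correct and matches the paper's: Theorem \ref{tgequiv} is presented there as a summary of the classification, so its proof is precisely the reduction to canonical forms via Theorem \ref{sfforms} together with Theorem \ref{classification} that you outline. One small slip: the homogeneous block of $\env$'s canonical matrix $\left(\begin{smallmatrix}0 & -1 & 0 \\ 1 & 0 & 1 \\ 0 & 0 & 0\end{smallmatrix}\right)$ is $M_1$ (that is, $M_q$ with $q=1$), not $M_{yx}$---but this does not affect the argument, since $M_1$ and $M_{\jp}$ are still distinct canonical forms in \eqref{22forms}, so Proposition \ref{sub} applies as you say.
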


\section{Homogenizations}
\label{homog}

Our goal in this section is to show that sf-congruence applies to
algebras of the form \eqref{hform} and prove a result analogous to
Theorem \ref{classification} for these algebras.

\begin{thm}
\label{hclass}
Let $H$ be for the form \eqref{hform}. 
Then $H$ is isomorphic to one of the following algebras, 
with one representative of $f$ given in each case:
\begin{alignat*}{3}
  &\hqp,  f=xy-qyx ~ (q \in \K^\times),   
  	& ~ & \hwa, f=xy-qyx-z^2 ~ (q \in \K^\times), \\
  &\hjp,  f=yx-xy+y^2,           
  	& ~ & \hsjp, f=yx-xy+y^2+z^2, \\
  &\henv, f=yx-xy+yz,           
  	& ~ & \henvv, f=yx-xy+y^2+xz, \\
  &\hxx,  f=x^2,              
  	& ~ & \hsxx, f=x^2-z^2,\\
  &\hyx,  f=yx,               
  	& ~ & \hos, f=yx-z^2, \\
  &\hkx,  f= x^2+yz.           
  	& ~ & 
\end{alignat*}
Furthermore, the above algebras are pairwise non-isomorphic, except 
\[\hqp \iso H(\mathcal{O}_{q\inv}(\K^2)) 
	\text{ and } \hwa \iso H(A_1^{q\inv}(\K)).\]
\end{thm}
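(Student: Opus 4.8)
The plan is to transport the entire argument for Theorem \ref{classification} to the graded setting, with the central variable $z$ taking over the role of the constant $1$, so that the same $3 \times 3$ matrices and the same relation $\sfsim$ govern both problems.

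First I would fix the dictionary between relations and matrices. A homogeneous degree-two element $f \in \K\langle x,y,z\rangle$ is recorded by a matrix $M \in \M_3(\K)$ in the ordered basis $(x,y,z)$. Since $z$ is central in $H$, the monomials $xz,zx$ (and $yz,zy$) coincide, so $f$ depends only on $\stf(M)$, and the hypothesis $f \notin \K[z]$ says precisely that the homogeneous block of $\stf(M)$ is nonzero, i.e. $\stf(M) \in G_3$. A substitution $x \mapsto p_{11}x+p_{12}y+p_{13}z$, $y\mapsto p_{21}x+p_{22}y+p_{23}z$, $z \mapsto z$ with invertible linear part---an element of $\grp_3$---preserves the two commutation relations and sends $f$ to $\phi(f)$, hence is an isomorphism of homogenizations, and rescaling $f$ changes nothing. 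Thus $M \sfsim M'$ forces $H \iso H'$, and Theorem \ref{sfforms} collapses every $H$ onto one of the eleven algebras listed, the $q \leftrightarrow q\inv$ identifications being the final clause of that theorem (cf. Theorem \ref{qas-iso}). This disposes of completeness and of the forward direction.

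The real content is the converse. These are connected $\NN$-graded algebras generated in degree one, so I would first reduce an arbitrary algebra isomorphism to a graded one and observe that, for every noncommutative algebra on the list, $\K z$ is exactly the degree-one part of the center (a short computation of the type in Proposition \ref{stabs}). This makes the line $\K z$ canonical, so a graded isomorphism carries the ideal $(z)$ to $(z)$ and $H/(z)$ becomes an isomorphism invariant. Now $H/(z) = \K\langle x,y \mid f_2\rangle$ for the homogeneous part $f_2$, so $H/(z)$ returns the classical two-variable type---one of $\qp$, $\jp$, $\K[x,y]$, $\yx$, $\xx$---which sorts the list into families and, most importantly, separates $\henv$ (where $H/(z)\iso\K[x,y]$) from $\henvv$ (where $H/(z)\iso\jp$), even though $\env\iso\envv$ by Proposition \ref{envv}: the isomorphism there is non-affine and does not survive homogenization. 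A second graded invariant, the configuration of normal degree-one elements, then splits the members carrying an extra $z^2$ from their undeformed relatives and separates the two quantum families: for example $\hqp$ has three lines of normal elements ($\K x,\K y,\K z$) against the single line $\K z$ in $\hwa$, and $\hjp$ has $\K y$ and $\K z$ against only $\K z$ in $\hsjp$. The non-domains $\hxx,\hsxx,\hyx,\hos$ are separated exactly as in Proposition \ref{nd-niso}, by global dimension, Gelfand-Kirillov dimension, and primeness, and the unique commutative member $H(\mathcal O_1(\K^2))\iso\K[x,y,z]$ is isolated by commutativity.

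After these invariants, only two pairs survive, each sharing both its quotient $H/(z)$ and its normal configuration: the Weyl pair $H(\fwa)$ (the $q=1$ case of $\hwa$) against $\henv$, with $H/(z)\iso\K[x,y]$, and $\hsjp$ against $\henvv$, with $H/(z)\iso\jp$. Each member of these pairs is an Artin-Schelter regular algebra of global dimension three of Type $S_1'$, so by \cite{NVZ} it is determined up to twist by its point scheme $C \subset \PP^2$ and the automorphism $\sigma$ of $C$; indeed the two members of each pair are the two normal forms inside a single family (II), resp. (I), of the introduction. I would compute $(C,\sigma)$ in each of the four cases and verify that the members of each pair differ, which is exactly the promised refinement of the classification in \cite{NVZ}. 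The main obstacle is this point-scheme computation together with the opening reduction to graded isomorphisms, namely confirming that $z$ is canonical uniformly enough to force the grading; once these are in hand, the remaining bookkeeping---the $q\leftrightarrow q\inv$ coincidences and the commutative case---is routine given Theorems \ref{sfforms} and \ref{qas-iso}.
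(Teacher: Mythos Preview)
Your outline is viable but takes a genuinely different route from the paper. Both begin identically: sf-congruence produces the list (Theorem \ref{sfforms}) and handles the forward direction. For pairwise non-isomorphism among the domains, however, the paper uses a second central quotient rather than point schemes. Once $\cnt(H)=\K[z]$ is known (Corollary \ref{cntrz}), any isomorphism $H\to H'$ carries the central primes $(z)$ and $(z-1)$ to ideals of the form $(z-\alpha)$ and $(z-\beta)$ (Lemma \ref{zprime}), so \emph{both} $H/(z)\iso\gr(A)$ and $H/(z-1)\iso A$ are invariants (Proposition \ref{hdist}); Theorem \ref{classification} then disposes of every trivial-center case directly. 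In particular your hardest pair $\hsjp$ versus $\henvv$ falls at once from $\sjp\not\iso\envv\iso\env$, with no point-scheme computation. Incidentally, your other surviving pair $H(\fwa)$ versus $\henv$ is already separated by your own normal-element test: $y$ is normal in $\henv$ (since $xy=y(x+z)$ and $yx=(x-z)y$) but not in $H(\fwa)$.

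What your route buys is uniformity at roots of unity, where $\cnt(H)\supsetneq\K[z]$ and Proposition \ref{hdist} no longer applies; the paper treats those cases separately via Theorem \ref{qas-iso} and a reference to \cite{gad-thesis}. What the paper's route buys is that it never needs your opening reduction to graded isomorphisms---it works with ungraded isomorphisms throughout, using only that centers map to centers. You flag that reduction as an obstacle, and rightly so: it is not automatic that an ungraded isomorphism of connected graded algebras can be replaced by a graded one, and you would need to justify it. One minor omission: $\hkx$ is also a non-domain (Proposition \ref{notas}) and belongs on your list with $\hxx,\hsxx,\hyx,\hos$; the paper separates it from $\hyx$ by GK dimension (Proposition \ref{ndiso}).
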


The key difference in this situation, 
versus that in the case of two-generated algebras, 
is that $\henv$ and $\henvv$ are non-isomorphic 
(see Proposition \ref{hdist}).

\begin{prop}
\label{notas}
The algebras $\hkx,\hxx,\hyx,\hsxx$, 
and $\hos$ are not domains.
\end{prop}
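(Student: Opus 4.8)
The plan is to exhibit, in each of the five algebras, an explicit factorization of zero into provably nonzero factors, so that each algebra acquires a zero divisor. The relations to exploit are the centrality of $z$ together with the single extra degree-two relation defining each algebra. First I would record that every $H$ of the form \eqref{hform} is a connected $\NN$-graded quadratic algebra with $x,y,z$ in degree one and all defining relations homogeneous of degree two; consequently $H_1 = \K x \oplus \K y \oplus \K z$ and, since no relation lives in degree one, the degree-two component is the plain vector-space quotient $V^{\otimes 2}/R$, where $V = H_1$ and $R \subset V^{\otimes 2}$ is the three-dimensional span of the defining relations. This gives exact control of $H$ in degrees one and two, which is all that the nonvanishing of my factors requires.

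The cases split according to the degree of the factors, and three of them are immediate. For $\hxx$ the relation is $x^2=0$, and $x \neq 0$ since it is a basis vector of $H_1$; for $\hyx$ the relation $yx=0$ exhibits the product of the nonzero degree-one elements $y$ and $x$. For $\hsxx$, centrality of $z$ yields $x^2 - z^2 = (x-z)(x+z)$, and both factors are nonzero in $H_1$.

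The remaining two cases require a short computation together with a degree-two nonvanishing check. In $\hos$, where $yx = z^2$, I would compute $y(xy - z^2) = yxy - yz^2 = z^2 y - z^2 y = 0$, using $yx = z^2$ and centrality of $z$; here $y \neq 0$ in $H_1$ and the point is that $xy - z^2 \neq 0$ in $H_2$. In $\hkx$, where $x^2 = -yz$, the generator $x$ commutes with $x^2 = -yz$, hence with $yz$, so $xyz = yzx = yxz$, giving $(xy - yx)z = 0$; here $z \neq 0$ in $H_1$ and the point is that $xy - yx \neq 0$ in $H_2$.

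The one step genuinely needing justification is the nonvanishing of $xy - z^2$ and $xy - yx$ in $H_2$, and this is where the degree-two identification pays off: since $H_2 = V^{\otimes 2}/R$ with $R$ three-dimensional, and since the monomial $xy$ appears in no defining relation of $\hos$ while neither $xy$ nor $yx$ appears in any defining relation of $\hkx$, the element $xy - z^2$ (resp. $xy - yx$) cannot lie in $R$ and is therefore nonzero. The main, rather mild, obstacle is thus only the bookkeeping that $R$ is genuinely three-dimensional and misses the relevant monomials; everything else is forced by associativity and the centrality of $z$.
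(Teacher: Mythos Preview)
Your proof is correct and takes essentially the same approach as the paper: the five zero-divisor factorizations you exhibit are exactly those the paper uses. The only minor difference is that for $\hkx$ the paper verifies $xy-yx\neq 0$ by passing to the quotient $\hkx/(z)\cong\xx$ (which is noncommutative) rather than working in degree two, and the paper leaves the nonvanishing of $xy-z^2$ in $\hos$ implicit where you make it explicit.
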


\begin{proof}
In the case of $\hxx$ and $\hyx$, this is obvious. 
In $\hkx$,
\begin{align}
\label{kxndom}
  xyz = x(-x^2) = (-x^2)x = yzx = yxz 
  \Rightarrow (xy-yx)z=0.
\end{align}
If $\hkx$ is a domain, 
then \eqref{kxndom} implies $\hkx$ is commutative. 
This is impossible since $\hkx/(z) \iso \xx$.
In $\hos$,
  \[ y(xy-z^2) = yxy - yz^2 = (yx)y-z^2y = (yx-z^2)y = 0.\]
Finally, in $\hsxx$,
  \[ (x+z)(x-z) = x^2 - xz + zx - z^2 = x^2-z^2=0.\]
\end{proof}

Let $H$ of the form \eqref{hform}.
Consider the linear map given by,
\begin{align}
\label{h.lmap}
  \phi(x) = p_{11}x + p_{12}y + p_{13}z, ~~\phi(y) = p_{21}x + p_{22}y + p_{23}z, ~~\phi(z) = z,
\end{align}
for $p_{ij} \in \K$ with $p_{11}p_{22}-p_{12}p_{21} \neq 0$.
One should compare this to the maps \eqref{sf.lmap}.
It is clear that this defines an isomorphism 
	\[ \K\langle x,y,z \mid xz-zx, yz-zy, f \rangle \rightarrow 
		\K\langle x,y,z \mid \phi(xz-zx), \phi(yz-zy), \phi(f) \rangle.\]
Moreover, it follows that, since $z \in \cnt(H)$ then $z \in \cnt(\phi(H))$.
As in the two-dimension general case, 
we do not assume these constitute all isomorphisms between algebras
of the form \eqref{hform}.
However, a consequence of Theorem \ref{hclass} is that
if two such algebras are isomorphic,
then there exists an isomorphism of the above form.

We can represent $H$ by a triple of matrices,
\begin{align}\label{xyform}
  (X,Y,Z) 
  	= \left(\begin{pmatrix}0 & 0 & 1 \\ 0 & 0 & 0 \\ -1 & 0 & 0\end{pmatrix},     
  		\begin{pmatrix}0 & 0 & 0 \\ 0 & 0 & 1 \\ 0 & -1 & 0\end{pmatrix},   
  		\begin{pmatrix}m_{11} & m_{12} & m_{13} \\ m_{21} & m_{22} & 	
  		m_{23} \\ 0 & 0 & m_{33} \end{pmatrix}\right),
\end{align}
with 
$\left(\begin{smallmatrix}
m_{11} & m_{12} \\ 
m_{21} & m_{22}\end{smallmatrix}\right)$ not the zero matrix. 
This representation follows by letting 
$\vec{x}=(x ~ y ~ z)^T$ 
so that
\begin{align*}
  xz-zx = \vec{x}^T X \vec{x}, ~~ 
  yz-zy = \vec{x}^T Y \vec{x}, ~~
  f = \vec{x}M\vec{x}^T.
\end{align*}
As in Section \ref{stfc}, 
the matrix $M$ is not uniquely determined for $f$ 
unless we fix a standard form for $M$.

Let $H'$ be another algebra of form \eqref{hform}
and suppose $\phi:H \rightarrow H'$ is a linear isomorphism given by \eqref{h.lmap}.
The algebra $H'$ is also defined by a triple, say $(X,Y,M')$.
Since $\phi(z)=z$, then we let $P \in \grp_3$ be the matrix of $\phi$.
It is too much to ask that $P^TXP=X$ and $P^TYP=Y$. 
We can still hope to preserve those relations up to linear combination.

\begin{prop}
Let $P \in \grp_3$ and let $X,Y$ be as in \eqref{xyform}. 
The matrices $X$ and $Y$ are linear combinations of 
$P^T X P$ and $P^T Y P$.\end{prop}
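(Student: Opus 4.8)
The plan is to reduce the statement to a $2\times2$ invertibility assertion by computing the two congruence products $P^T X P$ and $P^T Y P$ explicitly and then identifying the two-dimensional space they live in. Concretely, I would write $P_1 = \left(\begin{smallmatrix} a & b \\ c & d\end{smallmatrix}\right) \in \GL_2(\K)$ and $P_2 = \left(\begin{smallmatrix} e \\ f\end{smallmatrix}\right) \in \K^2$, so that $P = \left(\begin{smallmatrix} a & b & e \\ c & d & f \\ 0 & 0 & 1\end{smallmatrix}\right)$, and carry out the two matrix multiplications. The relevant point, which I would verify by direct computation, is that the bottom-right $e,f$ data cancels out entirely, leaving
\[ P^T X P = \begin{pmatrix}0 & 0 & a \\ 0 & 0 & b \\ -a & -b & 0\end{pmatrix}, \qquad P^T Y P = \begin{pmatrix}0 & 0 & c \\ 0 & 0 & d \\ -c & -d & 0\end{pmatrix}. \]

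The structural observation driving the proof is that both of these matrices, together with $X$ and $Y$ themselves, lie in the two-dimensional space $V$ of antisymmetric matrices whose upper-left $2\times2$ block vanishes. The map sending a matrix in $V$ to its upper-right column vector in $\K^2$ is a linear isomorphism $V \xrightarrow{\sim} \K^2$. Under this identification $X \mapsto (1~0)^T$ and $Y \mapsto (0~1)^T$, while $P^T X P \mapsto (a~b)^T$ and $P^T Y P \mapsto (c~d)^T$; one recognizes these last two as $P_1^T(1~0)^T$ and $P_1^T(0~1)^T$, i.e.\ the congruence acts on $V$ through the invertible operator $P_1^T$ on $\K^2$. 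Since $\det(P_1) \neq 0$, the images $(a~b)^T$ and $(c~d)^T$ are linearly independent and hence a basis of $\K^2$, so $(1~0)^T$ and $(0~1)^T$—and therefore $X$ and $Y$—are linear combinations of $P^T X P$ and $P^T Y P$.

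To make this explicit, expressing $X = \lambda \, (P^T X P) + \mu \, (P^T Y P)$ is exactly the linear system $P_1^T \left(\begin{smallmatrix}\lambda \\ \mu\end{smallmatrix}\right) = \left(\begin{smallmatrix}1 \\ 0\end{smallmatrix}\right)$, which is solvable precisely because $P_1$ is invertible; recovering $Y$ is the analogous system with right-hand side $\left(\begin{smallmatrix}0 \\ 1\end{smallmatrix}\right)$. I do not expect a genuine obstacle here: once the two products above are in hand, everything collapses to the invertibility of $P_1$, which is built into the definition of $\grp_3$. The only place to stay alert is that the upper-right vectors transform by $P_1^T$ rather than by $P_1$, so one should solve against $P_1^T$; this transpose does not affect solvability but would matter if one wanted the explicit coefficients.
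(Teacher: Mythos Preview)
Your proof is correct and follows essentially the same route as the paper's: compute $P^T X P$ and $P^T Y P$ explicitly, observe that the $P_2$-entries cancel, and then solve the resulting $2\times2$ linear system using $\det(P_1)\neq0$. Your additional framing in terms of the two-dimensional subspace $V$ and the action of $P_1^T$ on it is a nice conceptual gloss, but the substance of the argument is identical to the paper's.
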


\begin{proof}
Write
  \[ P = \begin{pmatrix}a_1 & a_2 & a_3 \\ b_1 & b_2 & b_3 \\ 0 & 0 & 1\end{pmatrix}.\]
Let
\begin{align*}
  U = P^T X P = \begin{pmatrix}0 & 0 & a_1 \\ 0 & 0 & a_2 \\ -a_1 & -a_2 & 0\end{pmatrix} \text{ and }
  V = P^T Y P = \begin{pmatrix}0 & 0 & b_1 \\ 0 & 0 & b_2 \\ -b_1 & -b_2 & 0\end{pmatrix}.
\end{align*}
We require $r,s \in \K$ such that $rU + sV = X$. 
That is, $ra_1 + sb_1 = 1$ and $ra_2 + sb_2 = 0$. 
Since $\det P\neq 0$, 
then this system has a solution. 
Similarly, we can find $r',s' \in \K$ such that $r'U + s'V = Y$. 
Hence, $P$ fixes the commutation relations for $z$.\end{proof}

As a consequence of the previous proposition, we have that
standard form congruence preserves the commutation relations for $z$.
Thus, we extend in a natural way the map \eqref{sfmap} to this case.
Moreover, there is no loss in referring to the matrix $M$ as the 
\textit{defining matrix} of $H$. 
If $M \sfsim M'$, then the triples $(X,Y,M)$ and $(X,Y,M')$ define isomorphic algebras. 
Therefore, if $H$ is of the form \eqref{hform},
then $H$ is isomorphic to an algebra given by one of the canonical 
forms from Theorem \ref{sfforms}.

One could now prove Theorem \ref{hclass} in a manner analogous
to Theorem \ref{classification}, that is, by considering ring-theoretic
properties of the algebras.
However, we take a different approach here by considering prime
ideals.
This will allow us to apply Theorem \ref{classification} to this
situation.

Just as each domain of the form \eqref{form} 
can be represented as a skew polynomial ring over $\K[y]$, 
so can each domain of the form \eqref{hform} 
be represented as a skew polynomial ring over $\K[y,z]$ (or $\K[x,z]$). 
In these cases we can completely determine the prime ideals.
In the case that $A$ is not a domain, we can partially describe 
the prime ideals of $H$.

The following is an immediate corollary of \cite{essreg}, 
Proposition 2.5.

\begin{cor}
\label{cntrz}
\begin{align*}
  \cnt(\hjp) &= \cnt(\hsjp) = \cnt(\henv) = \cnt(\henvv) = \K[z]. \\
  \cnt(\hqp) &= \begin{cases}\K[x^n,y^n,z] & \text{ if $q$ is a primitive $n$th root of unity} \\ \K[z] & \text{ otherwise.}\end{cases} \\
  \cnt(\hwa) &= \begin{cases}\K[x^n,y^n,z] & \text{ if $q \neq 1$ is a primitive $n$th root of unity} \\ \K[z] & \text{ otherwise.}\end{cases}  
\end{align*}\end{cor}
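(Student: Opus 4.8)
The plan is to reduce everything to the classically known centers of the two-generated algebras $\qp$, $\wa$, $\jp$, $\sjp$, $\env$, $\envv$ by exploiting that $z$ is a central, regular element of $H$. First I would record that each of the six algebras in question is an iterated Ore extension of the commutative ring $\K[y,z]$ — for instance $\hqp=\K[y,z][x;\sigma]$ with $\sigma(y)=qy$, and $\henvv=\K[y,z][x;\sigma,\delta]$ with $\sigma(y)=y+z$, $\delta(y)=y^2$ (after rewriting $xy=yx+y^2+xz=(y+z)x+y^2$ using $xz=zx$). In particular each is a domain with a PBW basis $\{x^iy^jz^k\}$, so $z$ is a non-zero-divisor and $H$ embeds in the central localization $H[z\inv]$.

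The key step is to identify $H[z\inv]$. Grading $H$ by total degree makes it connected graded with $z$ homogeneous of degree one; since $z$ becomes a unit in $H[z\inv]$, that ring is $\mathbb{Z}$-graded and strongly graded over its degree-zero part. Setting $X=xz\inv$ and $Y=yz\inv$, a direct check of relations shows $X,Y$ satisfy exactly the defining relation of the dehomogenized algebra $A\in\{\qp,\wa,\jp,\sjp,\env,\envv\}$; for example $XY-qYX=(xy-qyx)z^{-2}=z^2z^{-2}=1$ recovers $\wa$ from $\hwa$, and $YX-XY+Y=(yx-xy+yz)z^{-2}=0$ recovers $\env$ from $\henv$. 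Comparing PBW bases then yields a graded isomorphism $H[z\inv]\iso A[z,z\inv]$ with $z$ central of degree one. (This is the structural content of \cite{essreg}, Proposition 2.5.)

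From here the result is nearly mechanical. Because $z$ is central and regular, $Z(H)=Z(H[z\inv])\cap H$, and for a Laurent extension by a central variable $Z(A[z,z\inv])=Z(A)[z,z\inv]$, so $Z(H)=Z(A)[z,z\inv]\cap H$. For $A\in\{\jp,\sjp,\env,\envv\}$, and for $\qp,\wa$ when $q$ is not a root of unity (together with $\wa$ at $q=1$), one has $Z(A)=\K$, whence $Z(H)=\K[z,z\inv]\cap H=\K[z]$. When $q$ is a primitive $n$th root of unity the classical computation gives $Z(\qp)=\K[X^n,Y^n]$ (respectively $Z(\wa)=\K[X^n,Y^n]$ when $q\neq 1$, using that $1+q+\cdots+q^{n-1}=0$ makes $x^n,y^n$ central). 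Since $X^n=x^nz^{-n}$ and $Y^n=y^nz^{-n}$, intersecting $\K[X^n,Y^n,z,z\inv]$ with the nonnegatively $z$-graded ring $H$ keeps exactly the monomials $x^{an}y^{bn}z^{c}$ with $a,b,c\geq 0$; as $x^n,y^n,z$ mutually commute (here $q^{n}=1$ is used), this subalgebra is precisely $\K[x^n,y^n,z]$.

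The hard part will be the structural isomorphism $H[z\inv]\iso A[z,z\inv]$: one must confirm regularity of $z$ (supplied by the Ore/PBW description) and, more importantly, that the degree-zero subalgebra is exactly $A$ rather than something larger, which is where the PBW-basis comparison carries the real weight. Once that is in place the center computation is immediate given the well-known centers of $\qp$ and $\wa$; the only spot requiring any care is the root-of-unity intersection, where I would verify that no Laurent monomial in $X^n,Y^n$ survives inside $H$ beyond those already lying in $\K[x^n,y^n,z]$.
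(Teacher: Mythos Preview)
Your proposal is correct and matches the paper's approach: the paper records this corollary as an immediate consequence of \cite{essreg}, Proposition~2.5, which is precisely the structural isomorphism $H[z\inv]\iso A[z,z\inv]$ you isolate and sketch. You have in fact supplied more detail than the paper itself, including the intersection argument in the root-of-unity case.
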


We say an algebra $H$ of the form \eqref{hform} has 
\textit{trivial center} in case $\cnt(H)=\K[z]$. 
The proof of Theorem \ref{hclass} will be easier in this case.

\begin{lem}
\label{zprime}
Suppose $H$ is of the form \eqref{hform}. 
If $P$ is a prime ideal of $H$ with $P \cap \K[z] \neq 0$, 
then $P$ contains $z-\alpha$ for some $\alpha \in \K$.
\end{lem}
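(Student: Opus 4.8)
The plan is to exploit the fact that $z$ is central in $H$ — an immediate consequence of the defining relations $xz-zx$ and $yz-zy$ — so that $\K[z]$ sits inside $H$ as a central subalgebra and every element $a \in \K[z]$ generates a two-sided ideal $aH = Ha$. First I would choose a nonzero element $g = g(z) \in P \cap \K[z]$, which exists by hypothesis. Since $\K$ is algebraically closed, $g$ factors into linear factors over $\K$,
\[ g(z) = c\prod_{i=1}^{k}(z-\alpha_i)^{m_i}, \qquad c \in \K^\times, \ \alpha_i \text{ distinct}, \ m_i \geq 1. \]

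Next, writing $I_i = (z-\alpha_i)H$ for the two-sided ideal generated by the central element $z-\alpha_i$, I would observe that centrality lets products of such principal ideals collapse: for central $a,b$ one has $(aH)(bH) = abH$. Iterating this identity gives $I_1^{m_1}\cdots I_k^{m_k} = g(z)H \subseteq P$. Because $P$ is a prime ideal — meaning that for two-sided ideals $A,B$ with $AB \subseteq P$ one has $A \subseteq P$ or $B \subseteq P$ — a product of two-sided ideals contained in $P$ forces at least one factor to lie in $P$. Applying this to the product above yields $I_i \subseteq P$ for some $i$, whence $z - \alpha_i \in P$, which is the desired conclusion with $\alpha = \alpha_i$.

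The one point that requires care — and the only place the argument could go wrong — is the distinction between \emph{prime} and \emph{completely prime}. It is tempting to argue directly at the level of elements, concluding from $\prod(z-\alpha_i)^{m_i}\in P$ that some $z-\alpha_i \in P$; but that reasoning invokes the completely-prime property ($ab \in P \Rightarrow a \in P$ or $b \in P$), which need not hold here, since several of the algebras in question are not domains (Proposition \ref{notas}). The correct route is the ideal-theoretic formulation of primeness, together with the collapse of central principal-ideal products recorded above. Once these two ingredients are in place the conclusion is immediate, so I expect the write-up to be short; the main conceptual content is simply recognizing that centrality of $z$ turns the factorization of $g$ into a factorization into two-sided ideals to which primeness directly applies.
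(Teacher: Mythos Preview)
Your argument is correct and is essentially the paper's own proof: both use that $z$ is central so that a factorization of $g(z)$ in $\K[z]$ becomes a product of two-sided ideals (equivalently, $g_1Hg_2=g_1g_2H\subset P$), and then invoke the ideal-theoretic definition of primeness to extract a linear factor $z-\alpha$. The only cosmetic difference is that the paper peels off one factor at a time (if $g$ is reducible, write $g=g_1g_2$ and get $g_1\in P$ or $g_2\in P$, then iterate), whereas you factor $g$ completely from the outset; your explicit caution about prime versus completely prime is well placed and matches the paper's use of $g_1Hg_2\subset P$ rather than an elementwise argument.
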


\begin{proof}
Let $g \in P \cap \K[z]$ be nonzero. 
If $g$ is not irreducible in $\K[z]$,
then $g=g_1g_2$ for some nonconstant $g_1,g_2 \in \K[z]$.
Because $\K[z]$ is central, then $g_1Hg_2 = g_1g_2H \subset P$. 
The primeness of $P$ implies $g_1 \in P$ or $g_2 \in P$. 
Hence, $P$ contains $az-b$ for some $a,b \in \K$, $a \neq 0$, 
and so contains $a\inv(az-b) = z-a\inv b$.
\end{proof}

Before proceeding to the main theorem,
we need one additional definition. 
Let $J$ be an ideal in a ring $R$ and $\sigma \in \Aut(R)$. 
Then $J$ is \textit{$\sigma$-cyclic} if $J=J_1 \cap \cdots \cap J_n$ 
where the $J_i$ are distinct prime ideals of $R$ such that 
$\sigma\inv(J_{i+1})=J_i$ and $\sigma\inv(J_1)=J_n$.

Let $R$ be a commutative ring. 
The \textit{radical} of an ideal $I$ in $R$ is 
$\sqrt{I}=\{a \in R \mid a^n \in I \text{ for some } n\}$. 
It is not difficult to see that the radical of an ideal is again an ideal in $R$. 
The ideal $I$ is said to be \textit{primary} if $ab \in I$ implies 
$a \in I$ or $b^n \in I$ for all $a,b \in R$ and some $n \in \NN$. 
If $I$ is primary, then $\sqrt{I}$ is prime.

Suppose $H$ is a domain of form \eqref{hform}. 
If $z$ is not a zero divisor in $H$, 
then we can localize at the set $C=\K[z]\backslash\{0\}$.
We refer to this ring as $H_C$. 

\begin{thm}
\label{sprimes}
Let $H$ be a domain of the form \eqref{hform}. 
If $P$ is a nonzero prime ideal in $H$, 
then one of the following holds:
    \begin{enumerate}
    \item $z \in P$ and $P$ corresponds to a prime of $H/(z)$;
    \item $z - \alpha \in P$, $\alpha \in \K^\times$, and $P$ corresponds to a prime of $H/(z-\alpha)$;
    \item $xy-yx \in P$;
    \item $P \cap \K[y,z]=(g_1\cdots g_n)$ where the $g_i$ are irreducible polynomials in $\K[y,z]$ disjoint from $\K[z]$;
    \item $P \cap \K[x,z]=(g_1\cdots g_n)$ where the $g_i$ are irreducible polynomials in $\K[x,z]$ disjoint from $\K[z]$.
    \end{enumerate}
\end{thm}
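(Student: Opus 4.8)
The plan is to split on the intersection $P \cap \K[z]$, exploiting that $z$, being central in a domain, is a nonzerodivisor.

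First I would handle the case $P \cap \K[z] \neq 0$. Lemma \ref{zprime} gives $z - \alpha \in P$ for some $\alpha \in \K$, and since $z - \alpha$ is central the usual correspondence identifies $P$ with a prime of $H/(z-\alpha)$; the subcases $\alpha = 0$ and $\alpha \neq 0$ are exactly conclusions (1) and (2). This step is routine.

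The heart of the argument is the case $P \cap \K[z] = 0$. Here I would localize at the central Ore set $C = \K[z] \setminus \{0\}$ to obtain $H_C$, now an algebra over the field $F = \K(z)$; the prime $P$ survives as a nonzero prime $P_C$ of $H_C$ with $P_C \cap H = P$, so that $P \cap \K[y,z] = P_C \cap \K[y,z]$ and likewise for $\K[x,z]$. By the skew-polynomial presentations recorded before the theorem, $H_C$ is an Ore extension $F[y][x;\sigma,\delta]$ (or $F[x][y;\sigma,\delta]$) in which $\sigma$ restricts to the identity on $F$ and $\delta$ vanishes on $F$. Setting $\mathfrak q = P_C \cap F[y]$, an ideal of the PID $F[y]$, I would argue as follows. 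If $\mathfrak q = (h) \neq 0$, then the theory of prime contraction in Ore extensions shows $\mathfrak q$ is $\sigma$-stable, whence $h$ is, up to a unit, a product of distinct irreducibles forming a single $\sigma$-orbit --- a $\sigma$-cyclic ideal in the sense defined above. Clearing denominators via Gauss's lemma then identifies $P \cap \K[y,z]$ with the ideal $(g_1 \cdots g_n)$ of $\K[y,z]$ generated by the corresponding irreducibles, each of positive degree in $y$ and so disjoint from $\K[z]$; this is conclusion (4), and the symmetric computation over $F[x]$ yields (5).

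There remains the subcase $\mathfrak q = 0$ (and symmetrically $P_C \cap F[x] = 0$), where I would invert $F[y]\setminus\{0\}$ as well and study the primes of the skew polynomial ring $F(y)[x;\sigma,\delta]$ over the field $F(y)$ directly, checking that a surviving nonzero prime forces $x$ and $y$ to commute modulo $P$, i.e.\ $xy - yx \in P$, which is conclusion (3). The hard part will be precisely this analysis when the defining parameter is a root of unity --- the PI cases $\qp$ and $\wa$ with $q$ a nontrivial root of unity. There the center of $H_C$ properly contains $F$, and primes lying over height-one primes of that center along the Azumaya locus can meet neither $F[y]$ nor $F[x]$ and need not contain the commutator; controlling these requires a separate PI-theoretic argument (or the restriction to algebras of trivial center, $\cnt(H) = \K[z]$, where this obstruction does not arise).
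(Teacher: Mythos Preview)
Your architecture matches the paper's: split on whether $P \cap \K[z]$ is zero, dispose of the nonzero case via Lemma~\ref{zprime}, and in the zero case localize at $C = \K[z]\setminus\{0\}$ and study $I = P_C \cap F[y]$ in the Ore extension $H_C = F[y][x;\sigma,\delta]$. The divergence is in how that last step is executed. The paper does not argue from $\sigma$-stability directly; it invokes \cite{irvingII}, Theorem~7.2, which for a prime of an Ore extension over a commutative base of Krull dimension one yields a trichotomy: either $H_C/P$ is commutative (giving conclusion~(3) immediately), or $I$ is $\sigma$-cyclic (giving~(4)), or $I$ is primary with $\sigma$-stable radical, in which case the paper shows $I = (g^n)$ in the PID $F[y]$ (again~(4), with repeated factors). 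Conclusion~(5) then comes from rerunning the same argument with the roles of $x$ and $y$ swapped.

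Your direct route has a gap at the step ``$\mathfrak q$ is $\sigma$-stable, whence $h$ is, up to a unit, a product of distinct irreducibles forming a single $\sigma$-orbit.'' Plain $\sigma$-stability of a principal ideal in a PID does not force that structure: $(g^n)$ for a $\sigma$-fixed irreducible $g$ and $n>1$ is $\sigma$-stable too, and this is precisely the primary branch of Irving's trichotomy that the paper treats separately. What the contraction of a prime actually gives you is that $\mathfrak q$ is a $(\sigma,\delta)$-ideal and $(\sigma,\delta)$-prime, and when $\delta \neq 0$ this is not the same as $\sigma$-prime; sorting out the resulting possibilities is exactly the content of Irving's theorem. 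Your treatment of the subcase $\mathfrak q = 0$ by further localizing to $F(y)[x;\sigma,\delta]$ is a plausible alternative, but, as you correctly flag, the assertion that a surviving nonzero prime there forces $xy - yx \in P$ fails in the root-of-unity PI cases; the paper avoids this difficulty because the commutative-quotient branch of Irving's trichotomy already delivers conclusion~(3) without any further localization.
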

    
\begin{proof}
First, suppose $P'=P \cap \K[z]\neq 0$. 
Then $P'$ is a prime ideal of $\K[z]$ and so, by Lemma \ref{zprime}, $z-\alpha \in P$ for some $\alpha \in \K$. 
Now assume $P \cap \K[z] = 0$. 
In this case, $P$ extends to a prime ideal in $H_C$. 
Let $R=\K(z)[y]$, then $R$ has Krull dimension one. 
Let $I=P\cap R$. 
By \cite{irvingII}, Theorem 7.2, one of the following must hold:
    \begin{itemize}
    \item $H_C/P$ is commutative;
    \item $I$ is $\sigma$-cyclic for some $\sigma \in \Aut(R)$;
    \item $I$ is primary with $\sigma(\sqrt{I})=\sqrt{I}$.
  \end{itemize}
If $H_C/P$ is commutative, then $xy-yx \in P$. 
If $I$ is $\sigma$-cyclic, then $I=P_1 \cap \cdots \cap P_n$ for distinct prime ideals of $R$. 
But the prime ideals of $R$ are exactly extensions of prime ideals of $\K[y,z]$ disjoint from $\K[z]$. 
Therefore, $I = (g\sigma(g) \cdots \sigma^{n-1}(g))$ 
for some irreducible $g \in \K[y,z]$, $g \notin \K[z]$, 
$\sigma^n(g)=g$. 
Otherwise, $I$ is primary. 
Since $\sqrt{I}$ is prime, 
then $\sqrt{I}=(g)$ where $g \in R$ is irreducible. 
We claim $I = (g^n)$. 
Because $R$ is a principal ideal domain, 
$I = (h)$ for some $h \in R$. 
Write $h=tg^n$ where $n$ is maximal such that $g$ does not divide $t$. 
We claim $t$ is a constant. 
Suppose otherwise, then $g^m \notin I$ for any $m > 0$. 
This contradicts $g \in \sqrt{I}$, and so the claim holds and 
$I = (g^n)$. 
For the remaining case, 
we need only observe that we can rewrite 
$H$ as a skew polynomial ring with base ring $\K[x,z]$ and repeat.
\end{proof}

Note that the (4) and (5) occur if and only if
$H=\hqp$ or $H=\hwa$ with $q$ a primitive $n$th root of unity.
This is because in this case the center is nontrivial (Corollary \ref{cntrz}).
By Theorem \ref{sprimes}, it remains only to consider those prime ideals $P$
whose intersection with the base ring is zero.
In this case, it suffices to localize the base ring $\K[y,z]$ to 
$Q=\K(y,z)$ and consider the extension $Q[x;\sigma,\delta]$. 
Then we can appeal to \cite{lemat}, Corollary 2.3. 
Thus, the prime ideals lying over zero in $Q$ are of the form 
$(g)$ where $g \in \K[x^n,y^n,z]$ is irreducible and such that 
$g \notin \K[y^n,z]$ and $g \notin \K[x^n,z]$.

\begin{prop}
\label{hdist}
Let $A$ and $A'$ be of the form \ref{form} such that $\cnt(A)=\cnt(A)=\K$.
Let $H=H(A)$ and $H'=H(A')$, respectively. 
If $H \iso H'$ then either $A \iso A'$ and $\gr(A) \iso \gr(A')$, or else $A \iso \gr(A')$ and $A' \iso \gr(A)$.
\end{prop}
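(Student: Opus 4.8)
The plan is to recover both $A$ and $\gr(A)$ from the homogenization $H=H(A)$ as dehomogenizations of the central variable $z$, and then to track $z$ under an isomorphism. First I would record the homogenization dictionary: $z$ is a central nonzerodivisor of $H$ with $H/(z) \iso \gr(A)$ and $H/(z-1) \iso A$, and likewise for $H'$. Since each defining relation of $H$ is homogeneous of degree two, the rescaling $x \mapsto \lambda x$, $y \mapsto \lambda y$, $z \mapsto \lambda z$ is a graded automorphism carrying the ideal $(z-1)$ to $(z-\lambda\inv)$; hence $H/(z-\lambda)\iso A$ for every $\lambda \in \K^\times$, and similarly for $H'$.

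Next I would pin down the image of $z$. Because $\cnt(A)=\cnt(A')=\K$, Corollary \ref{cntrz} together with the definition of trivial center gives $\cnt(H)=\cnt(H')=\K[z]$. Any $\K$-algebra isomorphism $\Phi\colon H \to H'$ carries center to center, so it restricts to a $\K$-algebra isomorphism $\K[z] \to \K[z]$. As $\Phi(z)$ must generate $\K[z]$ as a $\K$-algebra, it is forced to be linear, $\Phi(z)=az+b$ with $a\in\K^\times$ and $b\in\K$. In particular $\Phi$ maps $(z)$ onto $(az+b)=(z-\lambda)$, where $\lambda=-a\inv b$.

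The proof then splits on whether $\lambda=0$. If $b=0$, then $\Phi$ preserves $(z)$ and sends $(z-1)$ to $(z-a\inv)$ with $a\inv\neq 0$; passing to quotients yields $\gr(A)=H/(z)\iso H'/(z)=\gr(A')$ and $A=H/(z-1)\iso H'/(z-a\inv)\iso A'$. If instead $b\neq 0$, then $\lambda\neq 0$ and $\Phi$ induces $\gr(A)=H/(z)\iso H'/(z-\lambda)\iso A'$; applying the identical argument to $\Phi\inv$, whose effect on $z$ is $a\inv z-a\inv b=a\inv(z-b)$ with nonzero constant term, gives $\gr(A')=H'/(z)\iso H/(z-b)\iso A$. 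These are exactly the two alternatives in the statement.

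The formal part---the case analysis on the constant term of $\Phi(z)$---is short; the substantive input is the homogenization dictionary and the center computation. The main obstacle I expect is justifying $H/(z)\iso\gr(A)$ and that $z$ is a nonzerodivisor, so that dehomogenization behaves as claimed, together with verifying $\cnt(H)=\K[z]$. The latter is supplied by Corollary \ref{cntrz}, while the former is the standard behaviour of homogenization and can be checked directly on the generators and relation of each algebra of the form \eqref{form}.
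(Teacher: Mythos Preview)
Your argument is correct and follows the same strategy as the paper: use $\cnt(H)=\cnt(H')=\K[z]$ to control the image of $z$, then read off the conclusion from the identifications $H/(z)\iso\gr(A)$ and $H/(z-\lambda)\iso A$ for $\lambda\neq 0$. Your version is in fact a bit cleaner and more complete: the paper routes through Lemma~\ref{zprime} to locate $\Phi(z)$, whereas you observe directly that $\Phi$ restricts to a $\K$-algebra automorphism of $\K[z]$ and hence $\Phi(z)=az+b$, and you spell out both the rescaling argument giving $H/(z-\lambda)\iso A$ and the case split on $b$, which the paper leaves implicit.
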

  
\begin{proof}
Suppose $\Phi:H \rightarrow H'$ is the given isomorphism. 
Let $I$ and $J$ be the ideal in $H$ generated by $z$ and $z-1$, 
respectively, and let $\Phi(I)=I',\Phi(J)=J'$. 
Since $I$ and $J$ are generated by a central element in $H$, 
then $I' \cap \K[z] \neq 0$ and similarly for $J'$. 
By Proposition \ref{zprime}, 
$I'=(z-\alpha)$ and $J'=(z-\beta)$ for some $\alpha,\beta \in \K$. 
This gives, 
\[\Phi(H/I) \iso \Phi(H)/\Phi(I) \iso H'/I'.\] 
Similarly, $\Phi(H/J) \iso H'/J'$.
\end{proof}

We observe that the previous proposition holds 
so long as the homogenizing element is regular in $H$ and $H'$.
We can now show that the non-domains are non-isomorphic. 
Details are available in \cite{gad-thesis}.

\begin{prop}
\label{ndiso}
The algebras $\hxx$, $\hsxx$, $\hyx$, $\hos$, and $\hkx$ are non-isomorphic.
\end{prop}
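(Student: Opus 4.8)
The plan is to peel $\hkx$ off from the remaining four algebras using the center, and then to separate $\hxx$, $\hsxx$, $\hyx$, $\hos$ from one another by feeding Proposition \ref{hdist} the non-isomorphism results of Proposition \ref{nd-niso}.

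First I would dispose of $\hkx$. Since $\kx\iso\K[x]$ is commutative its center is not $\K$, so Proposition \ref{hdist} is unavailable; instead I compute $\cnt(\hkx)$ by hand. The element $x^2=-yz$ commutes with $z$ (because $y$ and $z$ commute) and with $x$ (by $(xy-yx)z=0$, cf. \eqref{kxndom}), hence is central. Thus $\cnt(\hkx)$ contains the polynomial subalgebra $\K[z,yz]$ in two algebraically independent central elements, so $\gk(\cnt(\hkx))\geq 2$. By contrast each of $\xx$, $\sxx$, $\yx$, $\os$ has center $\K$, so the same fact used in the proof of Proposition \ref{hdist} gives that the four corresponding homogenizations have center exactly $\K[z]$, of GK dimension one. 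As the center is an isomorphism invariant, $\hkx$ is isomorphic to none of the others.

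Then I would treat the four remaining algebras. Having recorded that $\xx$, $\sxx$, $\yx$, $\os$ all have trivial center, Proposition \ref{hdist} applies to each pair among them. For each algebra I pair the underlying ring $A$ with its associated graded $\gr(A)$, using $\gr(\sxx)\iso\xx$ and $\gr(\os)\iso\yx$ (the degree-two leading terms of $x^2-1$ and $yx-1$):
\[ \hxx\colon(\xx,\xx),\quad \hsxx\colon(\sxx,\xx),\quad \hyx\colon(\yx,\yx),\quad \hos\colon(\os,\yx).\]
If two of these homogenizations were isomorphic, Proposition \ref{hdist} would force either $A\iso A'$, or the crossed identifications $A\iso\gr(A')$ and $A'\iso\gr(A)$. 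Checking the six pairs, each disjunct demands an isomorphism within $\{\xx,\sxx,\yx,\os\}$ that Proposition \ref{nd-niso} forbids; for example $\hyx\iso\hos$ would require $\os\iso\gr(\yx)=\yx$, which fails. This excludes every isomorphism among the four and completes the argument.

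The main obstacle is the center bookkeeping: verifying that $\xx$, $\sxx$, $\yx$, $\os$ each have center $\K$ (needed both to apply Proposition \ref{hdist} and to pin the homogenized centers down to $\K[z]$), and confirming that $x^2=-yz$ generates genuinely new central elements in $\hkx$. For the free products $\xx$ and $\sxx$ the triviality of the center is standard, while for $\yx$ and $\os$ it is a short direct computation (cf. \cite{gad-thesis}); once these are in place the non-isomorphisms follow formally from Propositions \ref{hdist} and \ref{nd-niso}.
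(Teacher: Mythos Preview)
Your argument is sound and takes a genuinely different route from the paper. The paper distinguishes the five algebras by two direct invariants: primeness separates $\{\hxx,\hsxx,\hos\}$ (prime) from $\{\hkx,\hyx\}$ (not prime), and GK dimension then separates within each group ($\gk\hyx=\gk\hos=2$ while the others are infinite). As written this actually leaves the pair $\hxx$, $\hsxx$ undistinguished, with the missing step deferred to \cite{gad-thesis}. Your approach instead isolates $\hkx$ via the size of the center and then feeds the remaining four through Proposition~\ref{hdist} and Proposition~\ref{nd-niso}. The payoff is uniformity: the $(A,\gr A)$ bookkeeping cleanly handles all six remaining pairs, including $\hxx$ versus $\hsxx$, without further ad hoc invariants. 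The cost, which you correctly flag, is the need to verify $\cnt(\xx)=\cnt(\sxx)=\cnt(\yx)=\cnt(\os)=\K$ so that Proposition~\ref{hdist} applies and so that the homogenized centers are pinned down to $\K[z]$; the paper's invariant-based approach sidesteps this at the price of a less self-contained argument.
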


\begin{proof}
The algebras $\hxx, \hsxx$ and $\hos$ are prime whereas $\hkx$ and $\hyx$ are not. 
Moreover, $\gk(\hyx) = 2$ while $\hkx$ has infinite GK dimension. 
Similarly, $\gk(\hos)=2$ whereas 
\[\gk(\hxx) = \gk(\hsxx)=\infty.\]
\end{proof}

\textit{Proof of Theorem \ref{hclass}.} 
By Proposition \ref{ndiso},
we need only consider the domains.
In this case, the algebras
$\hjp$, $\hsjp$, $\henv$, $\henvv$
as well as $\hqp$ and $\hwa$ for 
$q \in \K^\times$ not a root of unity,
are all distinct as a consequence of 
Proposition \ref{hdist} and then an application
of Theorem \ref{classification}.

This leaves only the root of unity case for $\hqp$ and $\hwa$,
as well as isomorphisms between those.
That $\hqpp \iso \hqp$ if and only if $p = q^{\pm 1}$ follows
from Theorem \ref{qas-iso}.
Similar methods can be applied to show 
$\hwap \iso \hwa$ if and only if $p = q^{\pm 1}$
and $\hqpp \niso \hwa$ for all $p,q \in \K^\times$.
The reader is referred to \cite{gad-thesis} for more details.
\qed

We now summarize our results in a similar manner as to Theorem \ref{tgequiv}. 
The statement, however, is slightly more satisfying.

\begin{thm}
Let $H$ and $H'$ be of the form \eqref{hform} 
with defining matrices $M,M' \in \M_3(\K)$, respectively. 
Then $M \sfsim M'$ if and only if $H \iso H'$.
\end{thm}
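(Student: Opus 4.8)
The plan is to prove the two implications separately, leaning on the machinery assembled in Section \ref{homog} together with the non-isomorphism half of Theorem \ref{hclass}. The forward direction is essentially already in hand: if $M \sfsim M'$, then by the discussion following the proposition that $X$ and $Y$ are linear combinations of $P^TXP$ and $P^TYP$, the triples $(X,Y,M)$ and $(X,Y,M')$ define isomorphic algebras. Indeed, the affine change of variables \eqref{h.lmap} realizing the sf-congruence fixes $z$ and carries each of $xz-zx$ and $yz-zy$ to a linear combination of the two, so it descends to an isomorphism $H \iso H'$.

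For the converse I would argue by reduction to canonical forms. First, Theorem \ref{sfforms} guarantees that the defining matrix $M$ of $H$ is sf-congruent to a unique form $M_0$ on the list, with the forms involving the parameter $q$ understood up to the replacement $q \mapsto q\inv$. Applying the forward direction to $M \sfsim M_0$, the algebra $H$ is then isomorphic to the canonical algebra $A_0$ attached to $M_0$ in Theorem \ref{hclass}. Running the same reduction for $H'$ produces a canonical form $M_0'$ and a canonical algebra $A_0'$ with $H' \iso A_0'$.

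Now suppose $H \iso H'$. Then $A_0 \iso A_0'$, and the non-isomorphism half of Theorem \ref{hclass} forces $A_0$ and $A_0'$ to be the same canonical algebra, the only possible ambiguity being the identifications $\hqp \iso H(\mathcal{O}_{q\inv}(\K^2))$ and $\hwa \iso H(A_1^{q\inv}(\K))$. These, however, are exactly the algebra-level shadows of the matrix-level identification $M_q \sfsim M_{q\inv}$ recorded in the last clause of Theorem \ref{sfforms}. Hence $M_0 \sfsim M_0'$, and transitivity of sf-congruence yields $M \sfsim M_0 \sfsim M_0' \sfsim M'$, as desired.

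The step I expect to require the most care is the compatibility of the two parameter identifications: one must verify that the only way two distinct canonical algebras can coincide under isomorphism is via the $q \leftrightarrow q\inv$ swap, and that this swap is matched on the matrix side by sf-congruence rather than signalling a genuinely new collapse of classes. This is precisely what distinguishes the present statement from Theorem \ref{tgequiv}, where the exceptional isomorphism $\env \iso \envv$ had no matrix-level counterpart. Here Proposition \ref{hdist}, together with the prime-ideal analysis of Theorem \ref{sprimes}, ensures $\henv \niso \henvv$, so no rogue isomorphism survives and the correspondence between sf-congruence classes and isomorphism classes is a clean bijection.
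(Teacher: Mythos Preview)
Your argument is correct and is precisely the derivation the paper leaves implicit: the theorem is stated there as a summary of Theorem \ref{hclass} and Theorem \ref{sfforms}, and your proof simply spells out how those two results combine (forward direction via the preserved triple $(X,Y,M)$, converse by reducing both sides to canonical form and invoking the non-isomorphism clause of Theorem \ref{hclass}). Your observation that the $q \leftrightarrow q^{-1}$ ambiguity on the algebra side exactly matches the one on the matrix side, and that $\henv \not\cong \henvv$ removes the exceptional case that spoiled Theorem \ref{tgequiv}, is exactly the point.
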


\section*{Acknowledgements}

The author would like to thank his advisor, Allen Bell, 
for suggesting this topic and for his patient guidance and support throughout this project.

\bibliographystyle{plain}
\begin{bibdiv}
\begin{biblist}

\bib{alev2}{article}{
      author={Alev, J.},
      author={Chamarie, M.},
       title={D\'{e}rivations et automorphismes de quelques alg\`{e}bres
  quantiques},
        date={1992},
     journal={Comm. Algebra},
      volume={20},
      number={6},
       pages={1787\ndash 1802},
}

\bib{adcorps}{article}{
      author={Alev, J.},
      author={Dumas, F.},
       title={Sur le corps des fractions de certaines alg\`ebres quantiques},
        date={1994},
     journal={J. Algebra},
      volume={170},
      number={1},
       pages={229\ndash 265},
}

\bib{alev1}{article}{
      author={Alev, J.},
      author={Dumas, F.},
       title={Rigidit\'e des plongements des quotients primitifs minimaux de
  {$U_q({\rm sl}(2))$} dans l'alg\`ebre quantique de {W}eyl-{H}ayashi},
        date={1996},
     journal={Nagoya Math. J.},
      volume={143},
       pages={119\ndash 146},
}

\bib{dumas}{article}{
      author={Alev, J.},
      author={Dumas, F.},
       title={Invariants du corps de {W}eyl sous l'action de groupes finis},
        date={1997},
     journal={Comm. Algebra},
      volume={25},
      number={5},
       pages={1655\ndash 1672},
}

\bib{essreg}{article}{
      author={Gaddis, J.},
       title={{PBW} deformations of {Artin-Schelter} regular algebras},
        date={2013},
     journal={Journal of Algebra and its Applications (to appear)},
      eprint={arXiv:1210.0861},
}

\bib{nspace}{incollection}{
      author={Gaddis, Jason},
       title={Isomorphisms of some quantum spaces},
        date={2014},
   booktitle={Ring theory and its applications},
      series={Contemp. Math.},
      volume={609},
   publisher={Amer. Math. Soc., Providence, RI},
       pages={107\ndash 116},
  url={http://dx.doi.org.go.libproxy.wakehealth.edu/10.1090/conm/609/12151},
      review={\MR{3204354}},
}

\bib{gad-thesis}{book}{
      author={Gaddis, Jason~D.},
       title={{PBW} deformations of {Artin-Schelter} regular algebras and their
  homogenizations},
   publisher={ProQuest LLC, Ann Arbor, MI},
        date={2013},
        note={Thesis (Ph.D.)--University of Wisconsin - Milwaukee},
}

\bib{goodwa}{book}{
      author={Goodearl, K.~R.},
      author={Warfield, R.~B., Jr.},
       title={An introduction to noncommutative {N}oetherian rings},
     edition={Second},
      series={London Mathematical Society Student Texts},
   publisher={Cambridge University Press},
     address={Cambridge},
        date={2004},
      volume={61},
}

\bib{hornserg}{article}{
      author={Horn, Roger~A.},
      author={Sergeichuk, Vladimir~V.},
       title={Canonical forms for complex matrix congruence and *congruence},
        date={2006},
     journal={Linear Algebra Appl.},
      volume={416},
      number={2-3},
       pages={1010\ndash 1032},
}

\bib{irvingII}{article}{
      author={Irving, Ronald~S.},
       title={Prime ideals of {O}re extensions over commutative rings. {II}},
        date={1979},
     journal={J. Algebra},
      volume={58},
      number={2},
       pages={399\ndash 423},
}

\bib{kirkkuz}{article}{
      author={Kirkman, Ellen},
      author={Kuzmanovich, James},
       title={Primitivity of {N}oetherian down-up algebras},
        date={2000},
     journal={Comm. Algebra},
      volume={28},
      number={6},
       pages={2983\ndash 2997},
}

\bib{lemat}{article}{
      author={Leroy, Andr{\'e}},
      author={Matczuk, Jerzy},
       title={Prime ideals of {O}re extensions},
        date={1991},
     journal={Comm. Algebra},
      volume={19},
      number={7},
       pages={1893\ndash 1907},
}

\bib{NVZ}{article}{
      author={Nafari, Manizheh},
      author={Vancliff, Michaela},
      author={Zhang, Jun},
       title={Classifying quadratic quantum {$\mathbb{P}^2\rm s$} by using
  graded skew {C}lifford algebras},
        date={2011},
     journal={J. Algebra},
      volume={346},
       pages={152\ndash 164},
}

\bib{smith}{incollection}{
      author={Smith, S.~P.},
       title={Quantum groups: an introduction and survey for ring theorists},
        date={1992},
   booktitle={Noncommutative rings ({Berkeley}, {CA}, 1989)},
      series={Math. Sci. Res. Inst. Publ.},
      volume={24},
   publisher={Springer},
     address={New York},
       pages={131\ndash 178},
}

\bib{suarez}{article}{
      author={Su{\'a}rez-Alvarez, M.},
      author={Vivas, Q.},
       title={Automorphisms and isomorphism of quantum generalized {Weyl}
  algebras},
        date={2013},
      eprint={arXiv:1206.4417},
}

\end{biblist}
\end{bibdiv}

\end{document}